\documentclass[12pt,a4paper]{amsart}
\usepackage{url,color}
\usepackage{amsmath,amsthm,amsfonts,amssymb,latexsym,accents,yhmath, tikz, tikz-cd, float}
\usepackage{bbm}
\usepackage{todonotes}
\usepackage{calrsfs}
\usepackage{url}
\usepackage[colorlinks=true, urlcolor=black, citecolor=black,
linkcolor=black, hyperfootnotes=true]{hyperref}
%\usepackage{showkeys}

%\setlength{\textwidth}{17truecm} \setlength{\textheight}{24truecm}
%\setlength{\oddsidemargin}{0pt} \setlength{\topmargin}{-45pt}
%\righthyphenmin=2

\textheight=23cm
 \textwidth=13.5cm
 \hoffset=-1cm
 \parindent=16pt

\theoremstyle{definition}

\DeclareMathAlphabet{\pazocal}{OMS}{zplm}{m}{n}

\newcommand{\miller}{\mathit{m}}

\newcommand{\IQ}{\mathbb Q}

\newcommand{\IP}{\mathbb P}

\newcommand{\D}{\mathcal{D}}

\newcommand{\bigvid}{\wideparen{\ \ }}

\newcommand{\uhr}{\upharpoonright}

\newcommand{\Split}{\mathrm{Split}}

\newcommand{\hot}{\mathfrak}

\newcommand{\nothing}[1]{}

\newcommand{\rng}{\mathrm{rng}}
\newcommand{\scsr}{\mathrm{scsr}}

%\stackMath
\newcommand\tsup[2][2]{%
 \def\useanchorwidth{T}%
  \ifnum#1>1%
    \stackon[-1pt]{\tsup[\numexpr#1-1\relax]{#2}}{\hspace{1pt}\scriptstyle\sim}%
  \else%
    \stackon[.5pt]{#2}{\hspace{1pt}\scriptstyle\sim}%
  \fi%
}

\newcommand{\nc}{\newcommand}

\nc{\cO}{\mathcal{O}}

\nc{\ram}{\mathsf{Ramsey}}
\nc{\soo}{\mathsf{S}_1(\cO,\cO)}
\nc{\swl}{\mathsf{S}_1(\Om,\Lambda)}
\nc{\goo}{\gone(\cO,\cO)}
\nc{\gwo}{\gone(\Om,\cO)}
\nc{\gwl}{\gone(\Om,\Lambda)}
\nc{\soox}[1]{\mathsf{S}_1(\cO(#1),\cO(#1))}
\nc{\swlx}[1]{\mathsf{S}_1(\Om(#1),\Lambda(#1))}
\nc{\goox}[1]{\gone(\cO(#1),\cO(#1))}
\nc{\gwox}[1]{\gone(\Om(#1),\cO(#1))}
\nc{\gwlx}[1]{\gone(\Om(#1),\Lambda(#1))}

\nc{\sfinoo}{\mathsf{S}_{\mathrm{fin}}(\cO,\cO)}
\nc{\sfinwl}{\mathsf{S}_{\mathrm{fin}}(\Om,\Lambda)}
\nc{\sfinww}[1]{\mathsf{S}_{\mathrm{fin}}(\Om(#1),\Om(#1))}
\nc{\gfinoo}{\gfin(\cO,\cO)}
\nc{\gfinwo}{\gfin(\Om,\cO)}
\nc{\gfinwl}{\gfin(\Om,\Lambda)}
\nc{\sfinoox}[1]{\mathsf{S}_{\mathrm{fin}}(\cO(#1),\cO(#1))}
\nc{\sfinwlx}[1]{\mathsf{S}_{\mathrm{fin}}(\Om(#1),\Lambda(#1))}
\nc{\sfinwwx}[1]{\mathsf{S}_{\mathrm{fin}}(\Om(#1),\Om(#1))}
\nc{\gfinoox}[1]{\gfin(\cO(#1),\cO(#1))}
\nc{\gfinwox}[1]{\gfin(\Om(#1),\cO(#1))}
\nc{\gfinwlx}[1]{\gfin(\Om(#1),\Lambda(#1))}

\nc{\mc}{\mathcal}
\nc{\thusfar}{\my{--- Edited thus far ---}}
\nc{\lei}{\le^\oo}
\nc{\sqsubs}{\sqsubseteq^*}
\nc{\card}[1]{\left|#1\right|}
\nc{\medcard}[1]{\biggl|\,#1\,\biggr|}
\nc{\smallcard}[1]{|\,#1\,|}
\nc{\bds}{bidirectional $\roth$-scale}
\nc{\bfP}{\mathbf{P}}
\nc{\bfQ}{\mathbf{Q}}
\nc{\bbT}{\mathbb{T}}
\nc{\bbZ}{\mathbb{Z}}
\nc{\bbN}{\mathbb{N}}%{\w}
\nc{\bbC}{\mathbb{C}}%{\w}
\nc{\beq}{\begin{equation}}\nc{\eeq}{\end{equation}}
%\nc{\beq}{\begin{eqnarray*}}\nc{\eeq}{\end{eqnarray*}}
\nc{\mbq}{\mb{?}}
\nc{\mb}[1]{{\mbox{\textbf{#1}}}}
\nc{\nop}{$\times$}
\nc{\fbn}{\!\!\fbox{\!\nop\!}\!\!}
\nc{\yup}{\checkmark}
\nc{\forces}{\Vdash}
\nc{\name}[1]{\dot{#1}}
\nc{\tf}{\my{FINISHED THUS FAR}}
\nc{\FU}{Fr\'echet--Urysohn}
\nc{\gs}{$\gamma$~space}
\nc{\Gab}{\Gamma_{\mathrm{B}}}
\nc{\Omb}{\Omega_{\mathrm{B}}}
\nc{\Ga}{\Gamma}
\nc{\Om}{\Omega}
\nc{\smallbinom}[2]{\begin{psmallmatrix} #1\\ #2 \end{psmallmatrix}}
\nc{\bgamma}{\smallbinom{\Om}{\Ga}}

\nc{\productive}[2]{(#1,\allowbreak #2)^\x}
\nc{\prdct}[1]{#1^\x}
\nc{\Sel}{\mathsf{S}}
\nc{\sset}[2]{\{\,#1 : #2\,\}}
\nc{\smb}[1]{{\!\!\mb{#1}\!\!}}
\nc{\medset}[2]{{\biggl\{\,#1 : #2\,\biggr\}}}
\nc{\smallmedset}[2]{{\bigl\{\,#1 : #2\,\bigr\}}}
\nc{\set}[2]{{\left\{\,#1 : #2\,\right\}}}
\nc{\seq}[2]{{\la\, #1 : #2\,\ra}}
\nc{\eseq}[1]{#1_0, \allowbreak #1_1, \allowbreak\dotsc} %explicit sequence

\nc{\eseqint}[3]{#1_{#2}, \allowbreak\dotsc,\allowbreak #1_{#3}} %explicit sequence
\nc{\eseqstart}[2]{#1_{#2},\allowbreak #1_{#2+1},\dotsc } %explicit sequence
\nc{\eprod}[1]{#1_{1}\times \allowbreak#1_{2}\times\dotsb}

\nc{\shortprod}[1]{\prod_{n=1}^\infty{#1}_n}

\nc{\eprodint}[3]{#1_{#2}\times \allowbreak\dotsb\times\allowbreak #1_{#3}}

\nc{\seleseq}[1]{#1_1\in \mathcal{#1}_1, \allowbreak #1_2\in \mathcal{#1}_2, \allowbreak\dotsc}
\nc{\cube}{(\Cantor)^\bbN}
\nc{\Match}{\op{Match}}
\nc{\concat}[1]{\hat{\phantom{a}}\langle #1\rangle}
\nc{\poset}{\mathbb{P}}
\nc{\fn}[1]{{\op{Fn}(#1\times\w,2)}}
\nc{\linadd}{\op{linadd}}
\nc{\nonprod}{\non^\x}
\nc{\alephes}{{\aleph_0}}
\nc{\my}[1]{{\color{red}{#1}}}
\nc{\later}[1]{{\color{green} #1}}
\nc{\BTs}[1]{{\color{green} #1 (BT)}}
\nc{\Cp}{\op{C}_\mathrm{p}}
\nc{\Bp}{\op{B}_p}
\nc{\Pa}[8]{\bibitem{#1} {#2}, \emph{#3}, {#4} \textbf{#5} ({#6}), {#7}--{#8}.}
\nc{\tPa}[5]{\bibitem{#1} {#2}, \emph{#3}, {#4}, to appear.}
\nc{\sPa}[4]{\bibitem{#1} {#2}, \emph{#3}, {#4}, submitted.}
\nc{\Bc}[9]{\bibitem{#1} {#2}, \emph{#3}, in: \textbf{#4} (#5), #6 #7, #8--#9.}
\nc{\fD}{\mathfrak{D}}
\nc{\fX}{\mathfrak{X}}
\nc{\Onbd}{\Op_{\mathrm{nbd}}} %{\Op_{\mathsf{nbd}}}
\nc{\Omnb}{\Om_{\mathrm{nbd}}} %{\Om_{\mathsf{nbd}}}
\nc{\od}{\mathfrak{od}}
%\nc{\Setting}[7]{\xymatrix@R=4pt@C=8pt{#1\ar@{-}[r]&#2\ar@{-}[r]&#3\\&#4\ar@{-}[u]\\
%#5\ar@{-}[uu]\ar@{-}[r] & #6\ar@{-}[u]\ar@{-}[r] & #7\ar@{-}[uu]}}
\nc{\Setting}[7]{\xymatrix@R=4pt@C=7pt{#1\ar@{-}[r]&#2\ar@{-}[r]&#3\\&#4\ar@{-}[u]\\
#5\ar@{-}[uu]\ar@{-}[r] & #6\ar@{-}[u]\ar@{-}[r] & #7\ar@{-}[uu]}}
\nc{\mx}[1]{\begin{matrix}#1\end{matrix}}
\nc{\plim}{p\txt{-}\lim}
\nc{\Bgp}{{\Z^\bbN}}
\nc{\Cgp}{{{\Z_2}^\bbN}}
\nc{\Cite}[1]{\textbf{[#1]}}
\nc{\Next}[1]{{#1^+}}
\nc{\cFin}{\mathrm{cF}}
\nc{\scsp}{\text{-scale space}}
\nc{\cfn}{\text{cofinal}\ }
\nc{\Con}{\text{Concentrated}}
\nc{\Lind}{\text{Lindel\"of}\,}
\nc{\con}{\text{-Concentrated}}
\nc{\lind}{\text{-Lindel\"of}\,}
\nc{\ctbl}{\text{countably }\allowbreak}
%\nc{\Men{}}{\text{Menger}}
%\nc{\Men{}}{\text{-Menger}}
\nc{\Hur}{\text{Hurewicz}}
\nc{\intvl}[2]{{[#1(#2),\allowbreak #1(#2\!+\!1))}}
\nc{\Bdd}{\mathbf{B}}
\nc{\Dfin}{\mathfrak{D}_\mathrm{fin}}
%\nc{\gp}{\mbox{-\textit{\tiny gp}}}
\nc{\grbl}{{\mbox{\textit{\tiny gp}}}}
\nc{\bbP}{\mathbb{P}}
\nc{\bbM}{\mathbb{M}}
\nc{\bbQ}{\mathbb{Q}}
\nc{\bbH}{\mathbb{H}}
\nc{\BOfat}{\B_{\Om_{\mathrm{fat}}}}%\B_{\mathrm{fat}}}
\nc{\Bgood}{\B_{\mathrm{good}}}
%\nc{\NR}{{{}^\bbN\bbR}}
\nc{\compactN}{\cl{\mathbb{N}}}
\nc{\blocks}[2]{\op{cl}_{#2}(#1)}
\nc{\blocksplus}[2]{\op{cl}^+_{#2}(#1)}
\nc{\arx}[1]{\texttt{http://arxiv.org/math/#1}}
\nc{\bq}{\begin{quote}}
\nc{\eq}{\end{quote}}
\nc{\cl}[1]{\overline{#1}}
\nc{\Cl}[2]{\mathrm{cl}_{#1}(#2)}
\nc{\CH}{the Continuum Hypothesis}
\nc{\MA}{Martin's Axiom}
\nc{\Bfat}{\B_\mathrm{fat}}
\nc{\inv}{^{-1}}
\nc{\Cantor}{{2^\w}}
\nc{\bP}{\mathbf{P}}
\nc{\bof}{\op{\fb}}
\nc{\dof}{\op{\fd}}
\nc{\bofF}{\bof(\cF)}
%\nc{\coFin}{\mathrm{coFin}}
\nc{\sr}[3]{\underset{\mbox{#3}}{\mbox{#1}}}
%{\underset{{\txt{#3\\ #2}}}{\mbox{#1}}}
%{{\txt{#1\\#3\\#2}}}
\nc{\gp}{\binom{\Om}{\Ga}}
\nc{\gpsmall}{\mbox{$\gp$}}
\nc{\gig}{\gimel}%{\gimel\Ga}
\nc{\gns}{\sone(\Om,\gig)}
\nc{\nsr}[2]{#1}
\nc{\Srg}{{\mathbb{S}}}
\nc{\Srgs}{{\mathbb{S}^*}}
\nc{\NN}{{\w^{\w}}}
\nc{\ZN}{{\Z^{\bbN}}}
\nc{\NNup}{{\w^{\uparrow\w}}}
\nc{\NNbarup}{{\overline{\w}}^{\uparrow\w}}
\nc{\NNupb}{{b^{\uparrow\bbN}}}
\nc{\Pof}{\op{P}}
\nc{\PN}{{\Pof(\w)}}
\nc{\rothx}[1]{{[#1]^{\mbox{\tiny $\infty$}}}}
\nc{\tx}{{\tilde{x}}}
\nc{\roth}{{[\w]^{\w}}}
\nc{\roths}{{[b]^{\mbox{\tiny $\infty$}}}} 
\nc{\Fin}{\mathrm{Fin}}
%\nc{\Fin}{[\bbN]^{\text{$<\!\!\infty$}}} %{{[\w]^{<\w}}}%{{[\bbN]^{<\aleph_0}}}
\nc{\ici}{[\bbN]^{ \infty, \infty}}%{{[\bbN]^{(\aleph_0,\aleph_0)}}}
\nc{\Inc}{{\compactN^{\uparrow\bbN}}}
\nc{\powInc}[1]{{\big(\Inc\big)^{#1}}}
\nc{\powFin}[1]{{\big(\Fin\big)^{#1}}}
\nc{\powPN}[1]{{\big(\PN\big)^{#1}}}
\nc{\NcompactN}{{\compactN^\bbN}}
%\nc{\seq}[1]{\la #1\ra_{n\in\bbN}}
\nc{\Uarrow}{\smash{\big\uparrow}}
\nc{\LE}{\preccurlyeq}
\nc{\GE}{\succcurlyeq}
\nc{\op}{\operatorname}
\nc{\im}{\op{Im}}
\nc{\Span}{\op{span}}
\nc{\maxfin}{\op{maxfin}}
\nc{\ran}{\op{range}}
\nc{\iso}{\cong}
\nc{\Madd}{{\M}^\star}
\nc{\cI}{\mathcal{I}}
\nc{\cJ}{\mathcal{J}}
\nc{\scrA}{\mathscr{A}}
\nc{\scrB}{\mathscr{B}}
\nc{\scrC}{\mathscr{C}}
\nc{\scrD}{\mathscr{D}}
\nc{\scrF}{\mathscr{F}}
\nc{\scrK}{\mathscr{K}}
\nc{\A}{\D\forall}
\nc{\B}{\mathrm{B}}
\nc{\cB}{\mathcal{B}}
\nc{\cZ}{\mathcal{Z}}
\nc{\bB}{\mathbf{B}}
\nc{\BS}{\mathbf{B}(\mathcal{S})}
\nc{\BF}{\mathbf{B}(\mathcal{F})}
\nc{\BU}{\mathbf{B}(\mathcal{U})}
\nc{\cSp}{\mathcal{S}^+}
\nc{\cFp}{\mathcal{F}^+}
\nc{\cUp}{\mathcal{U}^+}

\nc{\BG}{\B_\Ga}
\nc{\BL}{\B_\Lambda}
\nc{\BT}{\B_\Tau}
\nc{\BTstar}{\B_{\Tau^*}}
\nc{\BO}{\B_\Om}
\nc{\DO}{\cD_\Om}
\nc{\KO}{\cK_\Om}
\nc{\CG}{C_\Ga}
\nc{\CL}{C_\Lambda}
\nc{\CT}{C_\Tau}
\nc{\CTstar}{C_{\Tau^*}}
\nc{\CO}{C_\Om}
\nc{\COgp}{C_{\Om^{\grbl}}}
\nc{\CLgp}{C_{\Lambda^{\grbl}}}
\nc{\BOgp}{\B_{\Om}^{\grbl}}
\nc{\BLgp}{\B_{\Lambda^{\grbl}}}
\nc{\sfC}{\mathsf{C}}
\nc{\sfD}{\mathsf{D}}
\nc{\bD}{\mathbf{D}}
\nc{\Tau}{\mathrm{T}}
\nc{\cA}{\mathcal{A}}
\nc{\cK}{\mathcal{K}}
\nc{\cD}{\mathcal{D}}
\nc{\cF}{\mathcal{F}}
\nc{\cS}{\mathcal{S}}
\nc{\cT}{\mathcal{T}}
\nc{\cG}{\mathcal{G}}
\nc{\cY}{\mathcal{Y}}
\nc{\J}{\mathcal{J}}
\nc{\cL}{\mathcal{L}}
\nc{\cM}{\mathcal{M}}
\nc{\cN}{\mathcal{N}}
\nc{\cH}{\mathcal{H}}

\nc{\Op}{\mathrm{O}}
\nc{\rmA}{\mathrm{A}}
\nc{\rmF}{\mathrm{F}}
\nc{\rmB}{\mathrm{B}}
\nc{\rmD}{\mathrm{D}}
\nc{\rmP}{\mathrm{P}}
\nc{\cC}{\mathcal{C}}
\nc{\cP}{\mathcal{P}}
\nc{\bbR}{\mathbb{R}}
\nc{\bbS}{\mathbb{S}}
\nc{\cU}{\mathcal{U}}
\nc{\cQ}{\mathcal{Q}}
\nc{\Un}{\bigcup}
\nc{\cV}{\mathcal{V}}
\nc{\cR}{\mathcal{R}}
\nc{\tcR}{\tilde{\mathcal{R}}}
\nc{\cW}{\mathcal{W}}
\nc{\Z}{{\mathbb Z}}
\nc{\Impl}{\Rightarrow}
\long\def\forget#1\forgotten{\marginpar{\textcolor{green}{Forgetting...}}}
\nc{\ft}{\mathfrak{t}}
\nc{\fb}{\mathfrak{b}}
\nc{\fc}{\mathfrak{c}}
\nc{\fd}{\mathfrak{d}}
\nc{\fg}{\mathfrak{g}}
\nc{\oo}{\infty}
\nc{\fr}{\mathfrak{r}}
\nc{\fk}{\mathfrak{k}}
\nc{\bidi}{\mathfrak{bidi}}
\nc{\fu}{\mathfrak{u}}
\nc{\fh}{\mathfrak{h}}
\nc{\fp}{\mathfrak{p}}
\nc{\fj}{\mathfrak{j}}
\nc{\fs}{\mathfrak{s}}
\nc{\w}{\omega}
\nc{\x}{\times}
\nc{\Iff}{\Leftrightarrow}

\nc{\nin}{\notin}
\nc{\cat}{\hat{\ }}
\nc{\sub}{\subseteq}
\nc{\spst}{\supseteq}
\nc{\sm}{\setminus}
\nc{\as}{\subseteq^*}%{\let\proclaim\relax}
\nc{\les}{\le^*}
\nc{\leinf}{\le^{\infty}}
\nc{\leS}{\le_S}
\nc{\leF}{\le_F}
\nc{\leU}{\le_U}
\nc{\gew}{\geq^\textrm{w}}
\nc{\rest}{\restriction}

\nc{\la}{\langle}
\nc{\ra}{\rangle}
\nc{\E}{\exists}
\nc{\dom}{\op{dom}}
\nc{\cov}{\op{cov}}
\nc{\add}{\op{add}}
\nc{\addmen}{\add(\Men{})}
\nc{\cof}{\op{cof}}
\nc{\cf}{\op{cf}}
\nc{\non}{\op{non}}
\nc{\unif}{\op{non}}
\nc{\COV}{\op{COV}}
\nc{\ADD}{\op{ADD}}
\nc{\COF}{\op{COF}}
\nc{\NON}{\op{NON}}
\nc{\supp}{\op{supp}}
\nc{\impl}{\to}
\nc{\Lp}{\mathcal{L_\p}}
\nc{\Wlog}{without loss of generality}
\newtheorem{thm}{Theorem}[section]
\nc{\bthm}{\begin{thm}} \nc{\ethm}{\end{thm}}
\newtheorem{need}[thm]{Need}
\nc{\bneed}{\begin{need}\color{dg}} \nc{\eneed}{\end{need}}
\newtheorem{prop}[thm]{Proposition}
\nc{\bprp}{\begin{prop}} \nc{\eprp}{\end{prop}}
\newtheorem{fact}[thm]{Fact}
\nc{\bfct}{\begin{fact}} \nc{\efct}{\end{fact}}
\newtheorem{prob}[thm]{Problem}
\nc{\bprb}{\begin{prob}} \nc{\eprb}{\end{prob}}
\newtheorem{lem}[thm]{Lemma}
\nc{\blem}{\begin{lem}} \nc{\elem}{\end{lem}}
\newtheorem{app}[thm]{Application}
\nc{\bapp}{\begin{app}} \nc{\eapp}{\end{app}}
\newtheorem{claim}[thm]{Claim}
\nc{\bclm}{\begin{claim}} \nc{\eclm}{\end{claim}}
\newtheorem{cor}[thm]{Corollary}
\nc{\bcor}{\begin{cor}} \nc{\ecor}{\end{cor}}
\newtheorem{conj}[thm]{Conjecture}
\nc{\bcnj}{\begin{conj}} \nc{\ecnj}{\end{conj}}
\theoremstyle{definition}
\newtheorem{defn}[thm]{Definition}
\nc{\bdfn}{\begin{defn}} \nc{\edfn}{\end{defn}}
\newtheorem{obs}[thm]{Observation}
\nc{\bobs}{\begin{obs}} \nc{\eobs}{\end{obs}}
\theoremstyle{remark}
\newtheorem{rem}[thm]{Remark}
\nc{\brem}{\begin{rem}} \nc{\erem}{\end{rem}}
\newtheorem{cnv}[thm]{Convention}
\nc{\bcnv}{\begin{cnv}} \nc{\ecnv}{\end{cnv}}
\newtheorem{exam}[thm]{Example}
\nc{\bexm}{\begin{exam}} \nc{\eexm}{\end{exam}}
\nc{\bpf}{\begin{proof}} \nc{\epf}{\end{proof}
}
\nc{\be}{\begin{enumerate}}
\nc{\ee}{\end{enumerate}}
\nc{\bi}{\begin{itemize}}
\nc{\bimy}{\my{\begin{itemize}}
\nc{\eimy}{\end{itemize}}}
\nc{\itm}{\item}
\nc{\ei}{\end{itemize}}
%\nc{\ed}{\end{document}}
\nc{\Subsection}[1]{\goodbreak\subsection*{#1}}%\ \par}

\nc{\sone}{\mathsf{S}_1}
\nc{\sfin}{\mathsf{S}_\mathrm{fin}}
\nc{\ufin}{\mathsf{U}_\mathrm{fin}}
%\nc{\Split}{\mathsf{Split}}

\nc{\gone}{\mathsf{G}_1}    
\nc{\tgfin}{\tilde{\mathsf{G}}_\mathrm{fin}}
\nc{\gfin}{\mathsf{G}_\mathrm{fin}}

\nc{\men}[1]{\sfin(\Op(#1),\Op(#1))}
\nc{\sch}{\ufin(\cO,\Omega)}
\nc{\rothb}{\text{Rothberger}}%\sone(\cO,\cO)}
\nc{\pmen}{\sfin(\Omega,\Omega)}
\nc{\Rothb}{\sone(\Op,\Op)}

\nc{\prothb}{\sone(\Omega,\Omega)}
\nc{\tU}{{\tilde{U}}}
\nc{\tF}{{\tilde{F}}}
\nc{\tY}{{\tilde{Y}}}
\nc{\tX}{{\tsup[1]{X}}}
\nc{\dtX}{{\tsup[2]{X}}}
\nc{\dt}[1]{{\tsup[2]{#1}}}
\nc{\td}{{\tilde{d}}}
\nc{\tz}{{\tilde{z}}}
\nc{\cfd}{\cf(\fd)}

\nc{\msep}{\sfin(\cD,\cD)}
\nc{\rsep}{\sone(\cD,\cD)}
\nc{\cft}{\sfin(\Omega_{\mathbf{0}},\Omega_{\mathbf{0}})}
\nc{\scft}{\sone(\Omega_{\mathbf{0}},\Omega_{\mathbf{0}})}

\nc{\Umen}{U\text{-Menger}}%\ufin(\cO,\Gamma_U)}
\nc{\hur}{\ufin(\cO,\Gamma)}
\nc{\tUmen}{\tU\text{-Menger}}%{\ufin(\cO,\Gamma_{\tU})}

\nc{\Men}{\text{Menger}}
\nc{\Sch}{\text{Scheepers}}

\nc{\aspst}{\prescript{*}{}{\spst}\ }
\nc{\eqs}{=^*}
\nc{\ctblOm}{\Omega_{\mathrm{ctbl}}}
\nc{\GNga}{{\smallbinom{\Om}{\Ga}}}
\nc{\ctblga}{\smallbinom{\ctblOm}{\Ga}}

\nc{\nadd}{\cN_{\mathrm{add}}}
\nc{\ball}{\mathrm{B}}
\nc{\cOunif}{\cO^{\textrm{unif}}}

\nc{\sep}{
\vspace{2cm}
\noindent
\begin{minipage}{\textwidth}
	\textcolor{red}{\rule{\textwidth}{1pt}}
\end{minipage}
}
\nc{\FS}{\op{FS}}
\nc{\sums}{\op{SS}}
\nc{\SG}{\op{SG}}
\nc{\tSG}{\op{\widetilde{SG}}}
\nc{\G}{\op{G}}
\nc{\pBM}{\op{wgM}}
\nc{\FSG}{\op{FSG}}
\nc{\M}{\op{M}}
\nc{\gM}{\op{gM}}
\nc{\FP}{\op{FP}}
\nc{\nonNadd}{\non(\nadd)}
\nc{\borga}{\Ga_\mathrm{Bor}}
\nc{\pick}{x}
\nc{\gen}{y}
\nc{\nullzind}{\sone(\{\Op_n^{\mathsf{unif}}\}_{n\in\bbN},\Ga)}
\nc{\nullzindf}[1]{\sone(\{\Op_{#1}^{\mathsf{unif}}\}_{n\in\bbN},\Ga)}

\definecolor{dg}{RGB}{42,101,24}

\nc{\myb}[1]{\textcolor{blue}{#1}}
\nc{\mydg}[1]{{\color{dg}{#1}}}
\DeclareMathOperator{\eexists}{\exists}
\DeclareMathOperator{\fforall}{\forall}
\DeclareMathOperator{\fforallstar}{\forall^*}
\nc{\Exists}[1]{\bigl(\eexists #1\bigr)}
\nc{\Forall}[1]{\bigl(\fforall #1\bigr)}
\nc{\Forallstar}[1]{\bigl(\fforallstar #1\bigr)}
\nc{\End}[1]{\bigl(#1\bigr)}
\nc{\dmo}[2]{\DeclareMathOperator{#1}{#2}}
\dmo{\Asc}{Asc}
\nc{\plusmin}{\wedge}

\nc{\cBsub}{{\cB^{\mbox{\tiny $\sub$}}}}

\nc{\Alice}{{\textsc{Alice}{}}}
\nc{\Bob}{{\textsc{Bob}}}
\nc{\BM}{\op{BM}}
\nc{\Palpha}{{\bbS_\alpha}}
\nc{\Pbeta}{{\bbS_\beta}}
\nc{\Pwtwo}{\bbS_{\w_2}}
\nc{\restrict}{\upharpoonright}
\nc{\U}{\mathcal U}
\nc{\zrost}{\bar{\w}^{\uparrow\w}}

\newcommand{\spl}{\Split}

\title[Universally meager sets in the Miller 
model]{Universally meager sets in the Miller 
model and similar ones}

\subjclass[2020]{Primary: 03E35, 54D20, 54A35. Secondary: 
03E75.}
\keywords{Miller forcing,  Rothberger space, universally meager spaces, strong measure zero sets, Hurewicz space. }
\thanks{The research of the first and the third authors
was funded in whole by the Austrian Science Fund (FWF) [10.55776/I5930 and 10.55776/PAT5730424].
The research of the second author was funded by the National Science Center, Poland Weave-UNISONO call in the Weave programme
Project: Set-theoretic aspects of topological selections 2021/03/Y/ST1/00122
}

\author[V.~Haberl]{Valentin Haberl}
\address{Institut f\"ur Diskrete Mathematik und Geometrie, Technische Universit\"at Wien, Wiedner Hauptstrasse 8-10/104, 1040 Wien, Austria.}
\email{valentin.haberl.math@gmail.com}
\urladdr{https://www.tuwien.at/mg/valentin-haberl/}
\author[P.~Szewczak]{Piotr Szewczak}
\address{Institute of Mathematics, Faculty of Mathematics and Natural Science,
College of Sciences, Cardinal Stefan Wyszy\'nski University in Warsaw, W\'oycickiego 1/3,
01–938 Warsaw, Poland, and Institute of Mathematics, Faculty of Mathematics, Informatics and Mechanics, University of Warsaw, Banacha 2, 02-097, Warsaw, Poland}
\email{p.szewczak@wp.pl}
\urladdr{http://piotrszewczak.pl}
\author[L.~Zdomskyy]{Lyubomyr Zdomskyy}
\address{Institut f\"ur Diskrete Mathematik und Geometrie, Technische Universit\"at Wien, Wiedner Hauptstrasse 8-10/104, 1040 Wien, Austria.}
\email{lzdomsky@gmail.com}
\urladdr{https://dmg.tuwien.ac.at/zdomskyy/}

\begin{document}

\begin{abstract}
We work in the realm of sets of reals.
We prove that in the Miller model and in a model constructed by Goldstern--Judah--Shelah all universally meager sets have size at most $\w_1$.
Some relations between combinatorial covering properties in these models allow to obtain the same limitations for sizes of Rothberger spaces and Hurewicz spaces with no homeomorphic copy of the Cantor set inside.
It follows from our results that the existence of a strong measure zero set of size $\w_2$ does not imply the existence of a Rothberger space of size $\w_2$.
We also prove that in the Miller model all strong measure zero sets have size at most~$\w_1$.

\end{abstract}

\maketitle

\section{Introduction}

\bdfn[\cite{UM1}]
A subspace $X \sub 2^\w$ is \emph{universally meager}, if every Borel isomorphic image of $X$ is meager in $\Cantor$.
\edfn

The definition of universally meager subspaces was introduced by Zakrzewski~\cite{UM1} and shown by him to be equivalent~\cite{UM1} with an earlier studied by Grzegorek notion of \emph{absolutely of the first category} sets~\cite{grzegorek1980solution, grzegorek1984always, grzegorek1985always}.
Universally meager subspaces were also later  studied  by Zakrzewski who proved in \cite{UM2} several characterizations thereof which are crucial for us (see Lemma~\ref{lem:3.1}).
In some models of set theory there are limitations for sizes of universally meager subspaces.
This is the case, e.g., in the Cohen model (a forcing extension of a ground model of CH using a finite support iteration (equivalently, product) of the Cohen forcing, of length $\w_2$), where all universally meager subspaces have size at most $\w_1$, see the discussion after Problem~\ref{prob:6.5} for more details.
In our work we consider these limitations in the Miller model (a forcing extension of a ground model of GCH
using a countable support iteration forcing of length $\w_2$, introduced by Miller~\cite{17inHSZ1}) and in the model constructed by Goldstern--Judah--Shelah~\cite[the proof of Theorem~0.16]{GJS1993}.

\bthm\label{thm:UM}
In the Miller model and in the Goldstern--Judah--Shelah model, every universally meager subspace has size at most $\w_1$.
\ethm

The proof of Theorem~\ref{thm:UM} is based on the fact that
forcing notions involved satisfy the property $(\dagger)$ introduced in \cite{RepZd2019}.
It turns out that also the Cohen forcing shares the $(\dagger)$ property.
Our results mentioned above concern universally meager subspaces but the origin of our investigations is related to other properties of topological spaces and Theorem~\ref{thm:UM} helps to solve more problems.

A set $X\sub 2^\w$ has \emph{strong measure zero} (\emph{SMZ}, in short) if for every sequence
$\seq{\epsilon_n}{n\in\w}$ of positive reals there exists a sequence
$\seq{a_n}{n\in\w}$ of ``centers'' such that
the family $\sset{\B(a_n,\epsilon_n)}{n\in\w}$ covers $X$, where
$\B(a,\epsilon):=\sset{x\in\Cantor}{\rho(x,a)<\epsilon}$ 
and $\rho $ is any metric generating the standard topology on $2^\w$.
This notion was introduced by Borel who conjectured~\cite{Bor19}
that only countable sets have this property.
The Borel conjecture has been
consistently refuted by Sierpi\'{n}ski~\cite{Sie28} in 1928, and proved to be
consistent in 1976 by Laver~ \cite{Lav76}, and since then appears in different contexts  as well as serves as a blueprint for similar conjectures in set theory of 
the reals, see, e.g.,~\cite{GKWS14}.

Rothberger noticed~\cite{Rot38}, that being a strong measure set might not be a topological property and he modified it to the following topological covering property.
By \emph{space} we mean a topological space homeomorphic with a subspace of the Cantor cube $\Cantor$.
A space $X$ is \emph{Rothberger} if for any sequence $\seq{\cU_n}{n\in\w}$ of open covers of $X$, there are sets $U_n\in\U_n$ for $n\in\w$, such that the family $\sset{U_n}{n\in\w}$ covers $X$. 
It is easy to see that each Rothberger subset of $\Cantor$ has SMZ.

The first inspiration to this paper comes from our work~\cite{ConcMiller} on concentrated and $\gamma$-spaces.
A space $X$ is \emph{concentrated on a countable set} $D\sub X$, if the set $X\sm U$ is countable for any open set $U\sub X$ containing $D$.
A space $X$ is \emph{concentrated} if it is uncountable and concentrated on some of its countable subsets.
Bartoszy\'{n}ski--Halbeisen conjectured~\cite[Proposition~3.2]{3inHSZ1}
that there exists a concentrated space of size $\w_2$ in the Miller model.
This conjecture has been refuted~\cite{ConcMiller}, where we have also showed that any $\gamma$-space has size at most $\w_1$ in the Miller model (see \cite{11inHSZ1} for the definition of $\gamma$-spaces). 
Since the class of all Rothberger spaces of $\Cantor$ contains all concentrated and $\gamma$-spaces~\cite{11inHSZ1}, we have asked in our previous work~\cite{ConcMiller}, the following question.

\bprb[{\cite[Problem~4.2]{ConcMiller}}]\label{prb:RM}
Does every Rothberger space have size at most $\w_1$, in the Miller model?
\eprb

We get the following result which also solves Problem~\ref{prb:RM}.

\bthm\label{thm:R}
In the Miller model and in the Goldstern--Judah--Shelah model, every Rothberger space has size at most $\w_1$.
\ethm

Now we explain how Theorem~\ref{thm:UM} delivers the above results.
To this end we need an auxiliary notion of \emph{Hurewicz} spaces~\cite{Hur27}.
We shall not work directly with Hurewicz spaces,
but rather cite their known properties, hence we 
refer the reader to a work of Tsaban~\cite{Tsa_book} for more information of such spaces.
For definitions and properties of cardinal characteristics of the continuum used below, we refer the reader to a work of Blass~\cite{Bla10}.
There are four main steps to prove Theorem~\ref{thm:R}.

\begin{itemize}
\item The inequality $\fu<\fg$ holds in the Miller model and in the Goldstern--Judah--Shelah model.
\item In models of $\fu<\fg$, every Rothberger space is Hurewicz and \emph{totally imperfect}, i.e., does not contain a homeomorphic copy of $\Cantor$ inside~\cite{SemifilterZdo};
\item Every Hurewicz totally imperfect subspace of $2^\w$ is universally meager~\cite{UM1}. 
\item Apply Theorem~\ref{thm:UM}.
\end{itemize}

The following diagram presents relations between considered here properties for subspaces of $\Cantor$ (only one implication requests some additional assumptions beyond ZFC and it holds, e.g., when $\fu<\fg$).

\begin{figure}[H]
\begin{tikzcd}[ampersand replacement=\&,column sep=1cm]
\text{concentrated}\arrow{rd} \& \& \begin{matrix}\text{Hurewicz and}\\
\text{totally imperfect}
\end{matrix}
\arrow{r} \& 
\begin{matrix}\text{universally}\\\text{meager}\end{matrix}\\
 \& \text{Rothberger}\arrow[ru, "\fu<\fg"] \arrow[rd] \& \&\\
\text{property }\gamma\arrow{ru} \& \& \text{strong measure zero} \& \\
\end{tikzcd}
\end{figure}

Some additional properties of the Goldstern--Judah--Shelah model allows to prove that, in this model, a space is Rothberger if and only if it is Hurewicz and totally imperfect (see Theorem~\ref{thm:gjs_R=H}).

In the above model, we have $\w_2=\fc=\fd>\fb$ and any union of less than $\fc$ many SMZ sets has SMZ (in particular each set of size less than $\fc$ has SMZ).
If $\fc=\fd$ and each set of size less than $\fc$ has SMZ, then there is a SMZ set of size $\fc$, which is the case in the Goldstern--Judah--Shelah model.
Combining this with Theorem~\ref{thm:R}, we show that the existence of a SMZ set of size $\w_2$ does not imply the existence of a Rothberger space of size $\w_2$.
This could be compared with a result of Miller~\cite[Proposition~8]{Mil05} stating that such an implication holds in the case of $\w_1$.

\bthm\label{main_gjs} 
In the Goldstern--Judah--Shelah model,  there exists a SMZ set of size $\w_2 = \mathfrak{c}$, but  every Rothberger space has size at most $\w_1$. 
\ethm

Theorem~\ref{main_gjs} can be compared with Judah's result
\cite[Theorem~1.3]{Jud88} which gives that the existence of
SMZ sets of size $\hot c$, which can be arranged to be $\w_2$ there,
does not imply the existence of some specific Rothberger spaces of
size $\hot c$, namely generalized Luzin sets. However,
it is easy to see that the ground model reals 
form a Rothberger space of size $\hot c$ in the model constructed in the proof of 
\cite[Theorem~1.3]{Jud88}. Thus, 
the Goldstern--Judah--Shelah model from \cite[Theorem~0.16]{GJS1993}
cannot be replaced with Judah's  model from \cite[Theorem~1.3]{Jud88}
in Theorem~\ref{main_gjs}.

The \emph{weak Borel conjecture} is the statement that there are no SMZ sets of size $\mathfrak{c}$. In \cite[Corollary~3.6]{GJS1993}, it was shown that models obtained by using a countable support iteration of proper strongly $\w^\w$-bounding forcing notions that preserve CH in every intermediate model satisfy the weak Borel conjecture.
In \cite{BarShel03} Bartoszy\'{n}ski and Shelah showed that the weak dual Borel conjecture holds in the Laver model and in the Miller model. They were interested in finding a model, where both the Borel conjecture and the dual Borel conjecture hold, which was finally found in \cite{GKSW14}. However, it is still an open question if the Laver model is already a witness for both conjectures holding simultaneously. In \cite{Woh20} it was pointed out that it is an open problem if the Miller model satisfies the weak Borel conjecture. We address this problem in Section~5, delivering the following answer.

\bthm\label{main_smz}
In the Miller model, every SMZ set has size at most $\w_1$, i.e., the weak Borel conjecture holds.
\ethm

A subspace $X\sub\Cantor$ is \emph{perfectly meager} if for any perfect set $P\sub\Cantor$, the set $X\cap P$ is meager in the relative topology on $P$.
The proof of Theorem~\ref{main_smz} uses an idea from~\cite{JudShe94} allowing to show that, in the Miller model, each SMZ set is perfectly meager.
By the result of Bartoszy\'{n}ski~\cite[Theorems 9 and 11]{perfectlymeager2002}, in the Miller model, every perfectly meager subspace is universally meager.
In the light of the above mentioned results, Theorem~\ref{main_smz} is a consequence of Theorem~\ref{thm:UM}.

%%%%%%%%%%%%%%%%%%%%%%%%%%%%%%%%%%%%%%%%%%%%%%%%%%%%%%%%%%%%%
%%%%%%%%%%%%%%%%%%%%%%%%%%%%%%%%%%%%%%%%%%%%%%%%%%%%%%%%%%%%%%%

\section{Property $(\dagger)$}

The following notion was introduced in \cite{RepZd2019}.
All posets we consider are separative.

\bdfn
A forcing $\mathbb{P}$ has property $(\dagger)$
if for every countable elementary submodel $M\ni \IP$ of $H(\theta)$ for sufficiently large $\theta$, condition $p \in \IP\cap  M$, and functions $\varphi_i \colon \IP\cap M \to \IP\cap M$ for all $i\in\w$  such that $\varphi_i(p) \leq p$
for all $p \in \IP \cap M$ and $i \in\w$, there exists an $(M, P)$-generic condition $q\leq p$
 forcing
\begin{equation} \label{dag_orig}
\mbox{``}\Gamma_{\IP}\cap \{\varphi_i(p) : p \in\IP\cap M\}\ \  \mbox{is infinite for all} \ \ i\in\w\mbox{''} ,
\end{equation}
where $\Gamma_\IP$ is the canonical $\IP$-name for the $\IP$-generic filter.

Using notations from above, any function $\varphi\colon \bbP\cap M\to \bbP\cap M$ such that $\varphi(p)\leq p$ for all $p\in \bbP\cap M$ will be called \emph{regressive}.
\edfn

For our purposes in this work we shall need an equivalent 
formulation of $(\dagger)$ provided by the following straightforward result.

\blem \label{l_1.2}
The property
$(\dagger)$ of a forcing $\bbP$ is equivalent to its  version obtained 
by replacing statement~(\ref{dag_orig}) with the following formally weaker one:
\[
\mbox{``}\,\Gamma_{\IP}\cap \{\varphi_i(p) : p \in\IP\cap M\}\neq\emptyset \  \mbox{for all} \ \ i\in\w\mbox{''}.  
\]
\elem

\bpf
It suffices to prove that, if $\IP$ satisfies the condition in our Lemma, then it fulfills $(\dagger)$.
Using the separativity of $\IP$, for every $i\in\w$ there is a sequence $\seq{\varphi^i_j}{j\in\w}$ of regressive functions
from $M\cap \IP$ to $M\cap \IP$
such that 
\begin{gather*}\varphi_i(p)>\varphi^i_0(p)\geq \varphi_i(\varphi^i_0(p))>\varphi^i_1(p)\geq \varphi_i(\varphi^i_1(p))>\\
>\varphi^i_2(p) \geq \varphi_i(\varphi^i_2(p))> \cdots >\varphi^i_j(p)\geq \varphi_i(\varphi^i_j(p))>\varphi^i_{j+1}(p)\geq\cdots
\end{gather*}
for all $p\in\IP\cap M$ and $i,j\in\w$.
Hence, we obtain that for every $p\in\IP\cap M$ and $i,j\in\w$ the set
\[
Q^i_j(p):=\sset{r\in \varphi_i[M\cap\bbP]}{\varphi^i_j(p)\leq r}
\]
has size at least $j+1$.
Then there exists an $(M, \IP)$-generic condition $q\leq p$
forcing
\[
\mbox{``}\,\Gamma_{\IP}\cap \{\varphi^i_j(p) : p \in\IP\cap M\}\neq\emptyset \  \mbox{ for all} \ \ i,j\in\w\mbox{''}.
\]
Let $G$ be a $\IP$-generic filter over $V$ with $q\in G$. 
In $V[G]$, fix $p^i_j\in\IP\cap M$ such that $\varphi^i_j(p^i_j)\in G$ for all $i,j$.
For each $i$, by the above, the union $\bigcup_{j\in\w}Q^i_j(p^i_j)$ is an infinite subset of $\varphi_i[M\cap\bbP]\cap G$.
\epf

\brem
The range of a function $s$ we denote by $\rng(s)$.
For a finite sequence $t$ we shall  denote by $t(\text{end}) := t(|t|-1)$, i.e.,  the value of $t$ at its last element. 
For a tuple $\la a,b\ra$ we denote with $\mathrm{dom}(\la a, b\ra) = a$ and $\rng(\la a,b \ra) = b$. For an iterated forcing construction
$\la \IP_\alpha,\dot{\IQ}_\alpha:\alpha < \delta\ra$
 we use shorter notation  $\Gamma_\alpha$ and $\Gamma_{[\alpha,\beta)}$ for   
 $\Gamma_{\mathbb{P}_\alpha}$ and $\Gamma_{\mathbb{P}_{[\alpha,\beta)}}$, respectively. \hfill $\Box$
\erem

\blem
Let $\mathbb{P}$ be a forcing with property $(\dagger)$ and let $\dot{\mathbb{Q}}$ be a name for a forcing such that 
$$\mathbbm 1_\mathbb{P} \forces "\dot{\mathbb{Q}} \text{ satisfies } (\dagger)``.$$
Then $\mathbb{P} * \dot{\mathbb{Q}}$ satisfies $(\dagger)$.
\elem
\bpf
Fix a countable elementary submodel $M \ni \mathbb{P}*\dot{\mathbb{Q}}$ of $H(\theta)$ for sufficiently large $\theta$. 
Let $\seq{\varphi_n}{n \in \w}$ be a sequence of regressive functions from $M \cap (\mathbb{P}*\dot{\mathbb{Q}})$ to $M \cap (\mathbb{P}*\dot{\mathbb{Q}})$.
Suppose that $\la p_0, \dot{q}_0 \ra \in M \cap (\mathbb{P}*\dot{\mathbb{Q}})$.
For every $n \in \w$ and $\dot{q} \in M $ with $\mathbbm{1}_\mathbb{P} \forces "\dot{q} \in \dot{\mathbb{Q}}``$ define $\psi_{n,\dot{q}}\colon M \cap \mathbb{P} \rightarrow M \cap \mathbb{P}$ by setting $\psi_{n,\dot{q}}(p) := \mathrm{dom}(\varphi_n(\la p,\dot{q} \ra))$ for all $p \in M \cap \mathbb{P}$.
We clearly have $\psi_{n,\dot{q}}(p) \leq p$ for all $p$, and  there are only countably many $\psi_{n,\dot{q}}$'s.
Hence, by property $(\dagger)$, there is an $(M,\mathbb{P})$-generic condition $r \leq p_0$ such that for every $n \in \w$ and $\dot{q}$ as above we have 
\[
r \forces "\exists p_{n,\dot{q}} \in \mathbb{P} \cap M \enskip (\psi_{n,\dot{q}}(p_{n,\dot{q}}) \in \Gamma_\mathbb{P})``.
\]

Let $G$ be a $\mathbb{P}$-generic filter over $V$ with $r\in G$.
In $V[G]$, for every $n$ and $\dot{q}$, pick $p_{n,\dot{q}}$ with $\psi_{n,\dot{q}}(p_{n,\dot{q}}) = \mathrm{dom}(\varphi_n(\la p_{n,\dot{q}},\dot{q} \ra) \in G$.
Define regressive functions $\eta_n\colon M[G] \cap \dot{\mathbb{Q}}^G \rightarrow M[G] \cap \dot{\mathbb{Q}}^G$ for all $n \in \w$ as follows:
For $b \in M[G] \cap \dot{\mathbb{Q}}^G$ pick $\dot{q}_b \in M$ such that $\mathbbm{1}_\mathbb{P} \forces "\dot{q}_b \in \dot{\mathbb{Q}}``$ and $\dot{q}_b^G = b$. Set 
$$\eta_n(b) = \rng(\varphi_n(\la p_{n,\dot{q}_b},\dot{q}_b \ra))^G.$$
By property $(\dagger)$, there is an $(M[G],\dot{\mathbb{Q}}^G)$-generic condition $w \leq \dot{q_0}^G$ such that 
\[
w \forces "\exists b_n \in M[G] \cap \dot{\mathbb{Q}}^G \enskip (\eta_n(b_n) \in \Gamma_{\dot{\mathbb{Q}}^G})``
\]
for all $n \in \w$.
Let $H$ be a $\dot{\mathbb{Q}}^G$-generic filter over $V[G]$ with $w\in H$. In $V[G*H]$, for each $n \in \w$ there is $b_n \in M[G]\cap \dot{\mathbb{Q}}^G$ such that $\rng(\varphi_n(\la p_{n,\dot{q}_{b_n}}, \dot{q}_{b_n} \ra))^G \in H$.
By the definition of $p_{n,\dot{q}_{b_n}}$, we also have $\mathrm{dom}(\varphi_n(p_{n,\dot{q}_{b_n}}, \dot{q}_{b_n} \ra) \in G$. 

By the Maximality Principle, there are a $\mathbb{P}$-name $\dot{w}$ for $w$ and for every $n \in \w$ a $(\mathbb{P}*\dot{\mathbb{Q}})$-name $\dot{u}_n\in M$  such that 
$\mathbbm{1}_{\mathbb{P}*\dot{\IQ}}\forces"\dot{u}_n\in \mathbb{P}*\dot{\IQ}``$ and
$$\la r, \dot{w} \ra \forces "\mathrm{dom}(\varphi_n(\dot{u}_n)) \in \Gamma_\mathbb{P} \text{ and } \rng(\varphi_n(\dot{u}_n)) \in \Gamma_{\dot{\mathbb{Q}}}``,$$
which implies 
$$\la r, \dot{w} \ra \forces "\varphi_n(\dot{u}_n) \in \Gamma_{\mathbb{P}*\dot{\mathbb{Q}}}``.$$
Note that $\la r, \dot{w} \ra$ is an $(M, \mathbb{P}*\dot{\mathbb{Q}})$-generic 
condition below $\la p_0, \dot{q}_0 \ra$. Thus, $\mathbb{P}*\dot{\mathbb{Q}}$ satisfies $(\dagger)$. 
\epf

\bthm \label{preservation_dagger}
Let $\la \mathbb{P}_\alpha, \dot{\mathbb{Q}}_\alpha : \alpha < \delta \ra$ be a countable support iteration of forcings satisfying property $(\dagger)$ with the limit $\mathbb{P}_\delta$.
Then $\mathbb{P}_\delta$ satisfies $(\dagger)$. 
\ethm
\bpf
We shall assume that $(\dagger)$ is preserved by any 
iterations of length $<\delta$ with countable supports. 
Fix a countable elementary submodel $M \ni \delta, \mathbb{P_\delta}, \la \mathbb{P}_\alpha, \dot{\mathbb{Q}}_\alpha : \alpha < \delta \ra$ of $H(\theta)$ for sufficiently large $\theta$ and let $\la \alpha_n: n \in \w \ra \in (M \cap \mathrm{Ord})^\w$ be a strictly increasing cofinal sequence in $M \cap \delta$. Let $\la D_n: n \in \w\ra$ be an enumeration of all open dense subsets of $\mathbb{P}_\delta$, which lie in $M$. Suppose $\{\varphi_n: n \in \w\}$ is a collection of functions like in the definition of $(\dagger)$ and pick some $p_0 \in \mathbb{P_\delta}\cap M$. Without loss of generality we can assume that $\varphi_n[M \cap \mathbb{P}_\delta] \subseteq D_{n}$ for all $n \in \w$.

By induction on $n$ we pick $q_n\in\IP_{\alpha_n}$ and a name
$\dot{p}_n\in V^{\IP_{\alpha_n}}$ such that
\begin{itemize}
\item[$(i)$] $q_n\in\IP_{\alpha_n}$ is $(M,\IP_{\alpha_n})$-generic,
$q_n\uhr\alpha_{n-1}=q_{n-1}$ for all $n\geq 1$;
\item[$(ii)$] $\dot{p}_0=\check{p}$ and $q_n\forces_{\IP_{\alpha_n}}$ 
    \begin{itemize}
    \item[$(a)$] $ \dot{p}_n\in\IP_\delta\cap M$,
    \item[$(b)$] $\varphi_n(\dot{p}_n)\uhr\alpha_n\in\Gamma_{\alpha_n}$, 
    \item[$(c)$] $\dot{p}_n\uhr[\alpha_{n-1},\delta)= \varphi_{n-1}(\dot{p}_{n-1})\uhr[\alpha_{n-1},\delta)$ if\footnote{Here we get by the construcion a bit more, namely that   $\dot{p}_{n} \restriction
[\alpha_{n-1}, \delta)$ and $\varphi_{n-1}(\dot{p}_{n-1})\uhr[\alpha_{n-1},
\delta)$ are forced by $q_n$ to be equal ground-model objects.}
    $n\geq 1$,  and
    \item[$(d)$] $\forall u\in\IP_{[\alpha_n,\delta)}\cap M\:\forall\,i\in\w\:\exists\,s=s^n(u,i)\in\IP_{\alpha_n}\cap M$\\
    $\big(\varphi_i(s\bigvid u)\uhr\alpha_n \in\Gamma_{\alpha_n}\big)$.
    \end{itemize}
%\item[$(3)$] 
\end{itemize}
 First we construct $q_0$ such that $(i)$ and $(ii)$ are satisfied for $n=0$.
For every $r\in \IP_{\alpha_0}\cap M$ and $u\in \IP_{[\alpha_0,\delta)}\cap M$
set $\varphi^0_{i,u}(r)=\varphi_i(r\bigvid u)\uhr\alpha_0$ and note that
$\varphi^0_{i,u} \colon \IP_{\alpha_0}\cap M\to \IP_{\alpha_0}\cap M$ is a regressive function. Since $\IP_{\alpha_0}$ satisfies $(\dagger)$, there exists an $(M,\IP_{\alpha_0})$-generic condition $q_0\leq\varphi_0(p_0)\uhr\alpha_0$
such that 
\begin{equation} \label{eq_q_0}
q_0\forces "\forall i\in\w\: \forall u\in \IP_{[\alpha_0,\delta)}\cap M\enskip \big( \Gamma_{\alpha_0}\cap \varphi^0_{i,u}[M\cap\IP_{\alpha_0}]\neq\emptyset\big)``.
\end{equation}
Thus, $q_0$ and $\dot{p}_0$ satisfy $(i)$ and $(ii)$,
some parts of these conditions being of course vacuous for $n=0$.

Suppose that for some $n\in\w$ and all $k\leq n$ we have constructed $\dot{p}_k,q_k$ satisfying 
$(i)$ and $(ii)$. Let us fix a $\IP_{\alpha_n}$-generic filter $G$ over $V$ containing 
$q_n$ and work for a while in $V[G]$. From $(ii)(d)$ we can find
for every  $r\in\IP_{[\alpha_n,\alpha_{n+1})}\cap M$, $u\in\IP_{[\alpha_{n+1},\delta)}\cap M$ and $i\in\w$ a condition
$s=s^n(r,  u,i)\in \IP_{\alpha_n}$ such that
\begin{equation} \label{regr_def}
    \varphi_i(s\bigvid r\bigvid u)\uhr\alpha_n \in G.
\end{equation}
More precisely, we simply denote by  $s^n(r,u,i)$ (or just $s$ if $n,r,u,i$ are clear from the context) the condition
$s^n(r\bigvid u,i)$ provided by $(ii)(d)$.
It follows  that 
$$p_{n+1}:=s^n\big(\varphi_n(p_n)\uhr\big[\alpha_n,\alpha_{n+1}\big), \:\varphi_n(p_n)\uhr\big[\alpha_{n+1},\delta\big),\: n+1\big)\bigvid \varphi_n(p_n)\uhr[\alpha_n,\delta) $$
satisfies $(ii)(c)$ for $n+1$. 
 
Let us note that (\ref{regr_def}) 
 allows us to define a regressive map $\varphi^{n+1}_{i,u}$
from  $\IP^G_{[\alpha_n,\alpha_{n+1})}\cap M $ to\footnote{Note that 
the underlying set of $\IP^G_{[\alpha_n,\alpha_{n+1})}$ is in $V$ and does not depend on $G$, see, e.g., the discussion at the beginning of p. 23 in \cite{Bau83},  while the preorder relation depends on $G$. Thus, 
$\IP^G_{[\alpha_n,\alpha_{n+1})}\cap M =
\IP^G_{[\alpha_n,\alpha_{n+1})}\cap M[G].$} 
$\IP^G_{[\alpha_n,\alpha_{n+1})}\cap M$
for all $i,u$ as above in the following way:
\begin{equation}
  \varphi^{n+1}_{i,u}(r)=\varphi_i\big(s^n(r,u,i)\bigvid r\bigvid u\big)\uhr [\alpha_n,\alpha_{n+1}).  
\end{equation}
Since $\IP^G_{[\alpha_n,\alpha_{n+1})}$ satisfies $(\dagger)$, there exists
an $(M[G],\IP^G_{[\alpha_n,\alpha_{n+1})})$-generic condition $w_n$  such that 

\[
w_n\leq \varphi_n(p_n)\uhr [\alpha_n,\alpha_{n+1})=p_{n+1}\uhr [\alpha_n,\alpha_{n+1})
\]
and
\begin{equation} \label{ext_gen}
w_n\forces_{V[G]} 
 "\forall i\in\w\: \forall u\in \IP^G_{[\alpha_{n+1},\delta)}\cap M\: \big( \Gamma^G_{[\alpha_n,\alpha_{n+1})}\cap \varphi^{n+1}_{i,u}[M\cap\IP^G_{[\alpha_n,\alpha_{n+1})}]\neq\emptyset\big)``.
\end{equation}
Thus, if $H\ni w_n$ is a $\IP^G_{[\alpha_n,\alpha_{n+1})}$-generic filter over
$V[G]$, then in $V[G*H]$ we get from (\ref{ext_gen}) that for every $u\in \IP^{G*H}_{[\alpha_{n+1},\delta)}\cap M$ and $i\in\w$ there exists $r(u,i)\in \IP^{G*H}_{[\alpha_n,\alpha_{n+1})}\cap M$ such that
$\varphi^{n+1}_{i,u}\big(r(u,i)\big)\in H$, i.e., 
\begin{equation} \label{on_t_u_i}
    \varphi_i\big(s^n\big(r(u,i),u,i\big)\bigvid r(u,i)\bigvid u\big)\uhr [\alpha_n,\alpha_{n+1})\in H.  
\end{equation}
Now we work again in $V$.
Using the Maximality Principle several times we can find
the following objects: 
\begin{itemize}
\item A $\IP_{\alpha_n}$-name $\dot{w}_n$ which is forced by $q_n$
to be $(M[\Gamma_{\alpha_n}],\IP^{\Gamma_{\alpha_n}}_{[\alpha_n,\alpha_{n+1})})$-generic;
\item For every $u\in\IP_{[\alpha_{n+1},\delta)}\cap M$ and $i\in\w$ a 
$\IP_{\alpha_{n+1}}$-name $r(u,i)$ for a condition in $\IP_{[\alpha_n,\alpha_{n+1})}$, such that $q_{n+1}:=q_n\bigvid\dot{w}_n\in \IP_{\alpha_{n+1}}$
forces 
\begin{eqnarray} \label{on_t_u_i_forced}
    \varphi_i\big(s^n\big(r(u,i),u,i\big)\bigvid r(u,i)\bigvid u\big)\uhr \alpha_n\in \Gamma_{\alpha_n} \ \wedge\\
     \varphi_i\big(s^n\big(r(u,i),u,i\big)\bigvid r(u,i)\bigvid u\big)\uhr [\alpha_n,\alpha_{n+1})\in \Gamma_{[\alpha_n,\alpha_{n+1})}, \label{last_on_q}
\end{eqnarray}
\end{itemize}
\noindent \ (\ref{on_t_u_i_forced}) being a consequence of 
(\ref{regr_def}).
From (\ref{on_t_u_i_forced}) and (\ref{last_on_q})
we conclude that $q_{n+1}$ defined above along with the 
$\IP_{\alpha_{n+1}}$-names
$$ s^{n+1}(u,i)= s^n\big(r(u,i),u,i\big)\bigvid r(u,i), $$
satisfy $(i)$ and $(ii)$ for $n+1$, which completes our recursive construction of 
objects satisfying $(i)$ and $(ii)$.

We claim that  $q:=\bigcup_{n\in\w}q_n$ is an $(M,\IP_\delta)$-generic condition 
witnessing $(\dagger)$. Since the range of $\varphi_n$ is a subset of $D_n$, it suffices to show that $q$ forces $\varphi_n(\dot{p}_n)\in \Gamma_\delta$
for all $n\in\w$. By induction we show that for all $k \in \w$
\begin{equation} \label{dagger_3_proof}
q\forces " \forall n \leq k \enskip \varphi_n(\dot{p}_n)\uhr\alpha_k \in \Gamma_{\alpha_k}``.
\end{equation}
For $k = 0$ this is clear by $(ii)(b)$. Assume that (\ref{dagger_3_proof}) is true for $k$. Pick $n \leq k+1$. The case $n = k+1$ follows from $(ii)(b)$. Now suppose $n \leq k$. We have $q \forces "\varphi_n(\dot{p}_n) \restriction \alpha_k \in \Gamma_{\alpha_k}``$.  Furthermore, we have from $(ii)(b)$ and the induction hypothesis that 
$$q \forces "\forall j \in [n,k+1] \enskip (\varphi_j(\dot{p}_j) \restriction \alpha_k \in \Gamma_{\alpha_k})``.$$
Hence, by using $(ii)(c)$ for $k+1,k,k-1,\ldots,n+1$ instead of $n$, we obtain
\begin{eqnarray*}
q\forces "\varphi_{k+1}(\dot{p}_{k+1})\uhr[\alpha_k,\alpha_{k+1}) \leq \dot{p}_{k+1}\uhr[\alpha_k,\alpha_{k+1})= \varphi_k(\dot{p}_{k})\uhr[\alpha_k,\alpha_{k+1})\leq  
 \\
\leq \dot{p}_{k}\uhr[\alpha_k,\alpha_{k+1})=\cdots = \varphi_{n+1}(\dot{p}_{n+1})\uhr[\alpha_k,\alpha_{k+1})\leq \\
\leq \dot{p}_{n+1}\uhr[\alpha_k,\alpha_{k+1})= \varphi_n(\dot{p}_{n})\uhr[\alpha_k,\alpha_{k+1})``.
\end{eqnarray*}
Since $q\forces "\varphi_{k+1}(\dot{p}_{k+1})\uhr[\alpha_k,\alpha_{k+1})\in\Gamma_{[\alpha_k,\alpha_{k+1})}``$, we obtain 
$q \forces "\varphi_n(\dot{p}_{n})\uhr[\alpha_k,\alpha_{k+1}) \in \Gamma_{[\alpha_k,\alpha_{k+1})}``$. Thus, 
$$q \forces "\varphi_n(\dot{p}_{n}) \restriction \alpha_{k+1} \in \Gamma_{\alpha_{k+1}}``,$$
which completes our proof.
\epf

\section{Property $(\dagger)$ and universally meager subspaces}

Zakrzewski proved in \cite{UM1,UM2} among others many characterizations of universally meager subspaces. 
It will be convenient for us to use the following property characterizing universally meager subspaces
(see \cite[Theorem~1.1]{UM2}) as a definition.

\blem[{Zakrzewski \cite[Theorem~1.1]{UM2}}] \label{lem:3.1}
A set $X \subseteq 2^\w$ is universally meager if and only if for every Polish space $Y$ and continuous nowhere constant map $f\colon Y \rightarrow 2^\w$ the preimage $f^{-1}[X]$ is meager in $Y$.
\elem

The following Lemma is rather standard.

\blem \label{mod_groundmodel}
Suppose that CH holds in the ground model $V$  and
let $\la\mathbb{P}_\alpha, \dot{\mathbb{Q}}_\alpha : \alpha < \w_2 \ra$ be a countable support iteration of proper forcings such that 
$\mathbbm{1}_{\mathbb{P}_\alpha} \forces"|\dot{\mathbb{Q}}_\alpha| \leq \w_1``$
for all $\alpha < \w_2$. 
%\text{ and } \dot{\mathbb{Q}}_\alpha \text{ satisfies } (\dagger)
Let also $G_{\w_2}$ be a $\mathbb{P}_{\w_2}$-generic filter over $V$ and  $X\in V[G]$ be a universally meager subspace of $2^\w $.   Then there exists an $\w_1$-CLUB $C$ of ordinals $\alpha < \w_2$ with the following properties:
\begin{itemize}
\item $X \cap V[G_\alpha] \in V[G_\alpha]$;
\item If $Y$ is a Polish space, $f:Y \rightarrow 2^\w$ is a continuous nowhere constant map, both coded in $V[G_\alpha]$, then $f^{-1}[X]$ is contained in a meager subset of $Y$ coded in $V[G_\alpha]$;
\item If $Y$ is a Polish space, $f:Y\rightarrow 2^\w$ is a continuous map, both coded in $V[G_\alpha]$, and $R$ is a $G_\delta$-subset of $Y$ coded in $V[G_\alpha]$ such that $f[R] \cap V[G_\alpha] \cap X = \emptyset$, then $f[R] \cap X = \emptyset$.
\end{itemize}
As a consequence of the first two items, we get that $X \cap V[G_\alpha]$ is universally meager in $V[G_\alpha]$ for all $\alpha \in C$.
\elem

The proof is standard and is left as an exercise to the reader.

\bthm \label{main_result}
Suppose that  $\la\mathbb{P}_\alpha, \dot{\mathbb{Q}}_\alpha : \alpha < \w_2 \ra$ is such as in Lemma~\ref{mod_groundmodel} and
$\mathbbm{1}_{\mathbb{P}_\alpha} \forces "\dot{\mathbb{Q}}_\alpha$  satisfies  $(\dagger)``$
 for all $\alpha < \w_2$.
Then in $V^{\IP_{\w_2}}$ all universally meager subspaces  have size at most $\w_1$.
\ethm
\bpf
Let $\IP=\IP_{\w_2}$ and $G$ be a $\bbP$-generic filter over $V$.
Let $X\sub\Cantor$ be a universally meager subspace in $V[G]$ and $\name{X}$ be a name for the set $X$, i.e., $\name{X}^G=X$.
Let us pick $\alpha < \w_2$ in the $\w_1$-CLUB provided by Lemma \ref{mod_groundmodel}.  Suppose, towards a contradiction, that $\tau$ is a name for a real such that $r \forces "\tau \in \dot{X} \setminus V[G_\alpha]``$ for some condition $r\in G$. 
In what follows we assume without loss of generality that
$V=V[G_\alpha]$ and identify $G_{[\alpha,\w_2)}$ with $G$.

Until the opposite is stated, we work in $V$. 
For each $n\in \w$, let $D_n \sub \bbP$ be a dense open set below $r$ deciding $\tau \restriction n$.
Suppose $M$ is a countable elementary submodel of $H(\theta)$ for large enough $\theta$ such that $r, \mathbb{P}, \tau, \dot{X}, D_n \in M$. 
Equip $D_n \cap M$ with the discrete topology and consider a subspace $Y \sub \Pi_{n \in \w} (D_n \cap M)$ such that
\[
Y:=\smallmedset{\seq{p_n}{n \in \w} \in \Pi_{n \in \w} (D_n \cap M)}{p_{n+1} \leq p_n\text{ for all }n \in \w}.
\]
Since $Y$ is a nowhere locally compact, closed subspace of the zero-dimensional Polish space $\Pi_{n \in \w} (D_n \cap M)$, the space $Y$ is homeomorphic with $\NN$. 

For $y\in Y$ and $n\in\w$, the condition $y(n)$ decides $\tau\restriction n$, and thus there is $t_n\in 2^{<\w}$ such that
\[
y(n) \forces "\tau \restriction n = t_n``.
\]
Then a function $f\colon Y \to 2^\w$ defined by $f(y):= \bigcup_{n \in \w} t_n$ is a continuous function.  Since $\tau$ is forced to be not in $V$, the function $f$ is nowhere constant.
The set $X \cap V$ is universally meager in $V$, and thus the set 
\[
f^{-1}[(X \cap V) \cap f[Y]]
\] is meager in $Y \cap V$.
Let $\seq{O_n}{n \in \w}$ be a decreasing sequence of dense open sets in $Y$ such that 
\[
\bigcap_{n \in \w} O_n \cap f^{-1}[(X \cap V) \cap f[Y]] = \emptyset.
\]

If $t=y'\restriction k$ for some $y'\in Y$ and $k\in\w$, then $[t]:=\sset{y\in Y}{y\restriction k =t}$ is a basic open set in $Y$.
Let
\[
C_n := \sset{t \in \bigcup_{k \in \w} Y \restriction k}{[t] \sub O_n}
\]
for all $n\in\w$.
Then the set $C := \bigcup_{n \in \w} C_n$ is countable. 

Now we shall define a family of regressive functions to apply property~$(\dagger)$.

Define $\varphi^0 \colon M \cap \mathbb P \rightarrow M \cap \mathbb P$ as follows:
Fix $p \in M \cap \mathbb{P}$. Assume that $p$ is compatible with $r$. 
Since the open set $O_0$ is dense, there is $s_{p,0} \in C_0$ such that $s_{p,0}(0) \leq p$ and $[s_{p,0}] \subseteq O_0$. Let 
\[
\varphi^0(p):= 
\begin{cases}
s_{p,0}(\text{end}),& \text{if $p$ is compatible with  $r$},\\
p,&\text{ otherwise.}
\end{cases}
\]
Fix $n \in \w$ and $t \in C_n$.
Define $\varphi^{n+1}_t\colon M \cap \mathbb P \rightarrow M \cap \mathbb P$ as follows: 
Fix $p\in P$.
Assume that $p$ is compatible with $t(\text{end})$.
Since the open set $O_{n+1}$ is dense, there is a finite nonempty sequence of conditions $s_{p,n,t}$ such that 
\[
s_{p,n,t}(0) \leq p, t(\text{end})\quad\text{ and }\quad[t\bigvid s_{p,n,t}] \subseteq O_{n+1}.
\]
Let
\[
\varphi^{n+1}_t(p) := 
\begin{cases}
s_{p,n,t}(\text{end}),& \text{if $p$ is compatible with  $t(\text{end})$},\\
p,&\text{ otherwise.}
\end{cases}
\]

By property $(\dagger)$, there is an $(M, \mathbb{P})$-generic condition $q \leq r$   such that 
\[
q \forces "\Gamma_\mathbb{P} \cap \varphi^0[M \cap \mathbb P] \neq \emptyset``,\quad\text{ and }\quad  q \forces "\Gamma_\mathbb{P} \cap \varphi^n_t[M \cap \mathbb P] \neq \emptyset``
\]
for all $n \geq 1$ and $t \in C_n$. 

By density, we can find some $q \in G$ forcing the above.
From now on we work in $V[G]$.
Let us define $R:= \bigcap_{n \in \w} U_n$, where $U_n := \bigcup \sset{[s]}{s \in C, [s] \cap V \subseteq O_n}$.
The set $R$ is coded in $V$ and $U_n = O_n \cap V$ for all $n \in \w$. 
We interpret the definitions of $Y$ and $f$ in the generic extension and obtain in the same way as before that $Y$ is homeomorphic to $\w^\w$ and $f \colon Y \rightarrow 2^\w$ is continuous and nowhere constant. Both $Y$ and $f$ are coded in $V$.  
We will now inductively define some $y \in Y$. 
Pick $q_0 \in G \cap \varphi^0[M \cap \mathbb P]$.
Then there exist $r_0$ and $s_{r_0,0}$ with $[s_{r_0,0}] \sub U_0$ and $q_0 = s_{r_0,0}(\text{end})$.
In particular, for each $j \in \mathrm{dom}(s_{r_0,0})$ we have $s_{r_0,0}(j) \in G$.
Let $y_0 := s_0 := s_{r_0,0}$.
Suppose we have already constructed $y_n = s_0\bigvid s_1 \bigvid \dotsb \bigvid s_n$ such that $[s_0\bigvid \dotsb \bigvid s_k] \subseteq U_k$ holds for all $k \leq n$ and $y_n(j) \in G$ for all $j \in \mathrm{dom}(y_n)$.
We have $y_n \in \bigcup_{k \in \w} Y \restriction~k = \bigcup_{k \in \w} (Y \cap V) \restriction k$ and therefore, we can pick $q_{n+1} \in G \cap \varphi_{y_n}^{n+1}[M \cap \mathbb{P}]$.
Then there is some $s_{n+1}$ with $[y_n\bigvid s_{n+1}] \subseteq U_{n+1}$ and $s_{n+1}(\text{end}) = q_{n+1}$. Again we have $s_{n+1}(j) \in G$ for all $j$.
Let $y_{n+1} := y_n\bigvid s_{n+1}$.
Finally, define $y:= \bigcup_{n \in \w} y_n \in Y$. 

By the definition of $Y$, we have that for all $i \in \w$ there is some $t_i \in 2^i$ such that $y(i) \forces "\tau \restriction i = t_i``$.
Hence, $\tau^G = f(y)$.
Furthermore, by the construction, we have that $y \in [y_n] \sub U_n$ for all $n \in \w$.
It follows that $y \in \bigcap_{n \in \w} U_n$, and thus 
\[
\tau^G = f(y) \in f[\bigcap_{n \in \w} U_n].
\]

Recall that the set $(\bigcap_{n \in \w} U_n) \cap V = \bigcap_{n \in \w} (U_n \cap V) = \bigcap_{n \in \w} O_n$ is a witness for $X \cap V$ being universally meager, i.e., 
\[
(\bigcap_{n \in \w} U_n) \cap V \cap f^{-1}[(X \cap V) \cap f[Y \cap V]] = \emptyset.
\]
Therefore, $f[\bigcap_{n \in \w} U_n] \cap X \cap V = \emptyset$. By our assumptions on $V$ and $\alpha$, we obtain $f[\bigcap_{n \in \w} U_n] \cap X = \emptyset$, which is a contradiction to $\tau^G =f(y) \in f[\bigcap_{n \in \w} U_n] \cap X.$
\epf
 According to \cite[Proposition~2.3]{UM1} every totally imperfect Hurewicz subspace
 is universally meager. 
  Combined with Theorem~\ref{main_result}
this gives the following fact.

\begin{cor}\label{cor_dagger1}
Suppose that  $\la\mathbb{P}_\alpha, \dot{\mathbb{Q}}_\beta : \alpha \leq \w_2, \beta < \w_2 \ra$ is an iterated forcing construction with countable supports such that
\[
\mathbbm{1}_{\mathbb{P}_\alpha} \forces "|\dot{\mathbb{Q}}_\alpha|\leq\w_1\text{
and }\dot{\mathbb{Q}}_\alpha\text{ satisfies }  (\dagger)``
\]
 for all $\alpha < \w_2$.
Then in $V^{\IP_{\w_2}}$ all Hurewicz totally imperfect spaces  have size at most $\w_1$.
\end{cor}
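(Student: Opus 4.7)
The plan is essentially to chain the two ingredients that were just assembled: Theorem~\ref{main_result}, which bounds the size of universally meager subspaces of $\Cantor$, and the cited Proposition 2.3 of \cite{UM1}, which reduces totally imperfect Hurewicz subspaces of $\Cantor$ to universally meager ones.

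Concretely, I would start by fixing a Hurewicz totally imperfect space $X$ in $V^{\IP_{\w_2}}$. By the convention set at the beginning of the paper, ``space'' means a space homeomorphic to a subspace of $\Cantor$, so I may replace $X$ by its homeomorphic copy inside $\Cantor$ without loss of generality; since both being Hurewicz and being totally imperfect are properties preserved by homeomorphisms, the assumption $X \subseteq \Cantor$ is harmless. Next, applying \cite[Proposition~2.3]{UM1} (already quoted just before the corollary) I conclude that $X$ is a universally meager subspace of $\Cantor$ in $V^{\IP_{\w_2}}$.

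At this point the hypotheses of Theorem~\ref{main_result} are in force: the iteration $\la\mathbb{P}_\alpha, \dot{\mathbb{Q}}_\alpha : \alpha < \w_2\ra$ has countable supports, each iterand has size at most $\w_1$ and satisfies $(\dagger)$ in the intermediate model, and CH in the ground model is the standing background (used for the reflection argument in Lemma~\ref{mod_groundmodel}). Theorem~\ref{main_result} then directly yields $|X| \leq \w_1$, completing the proof.

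There is no genuine obstacle here; the only small point to keep track of is that Theorem~\ref{main_result} is stated for subspaces of $\Cantor$, and one has to invoke the convention about ``space'' to transfer the conclusion to any Hurewicz totally imperfect topological space. Properness of each iterand (which is needed to apply Lemma~\ref{mod_groundmodel} and hence Theorem~\ref{main_result}) is automatic from $(\dagger)$, since a forcing satisfying $(\dagger)$ admits $(M,\IP)$-generic conditions below every condition of $M$ and is therefore proper.
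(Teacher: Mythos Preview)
Your argument is correct and follows exactly the paper's approach: invoke \cite[Proposition~2.3]{UM1} to see that any totally imperfect Hurewicz subspace of $\Cantor$ is universally meager, then apply Theorem~\ref{main_result}. Your additional remarks about the ``space'' convention, the implicit CH hypothesis, and properness following from $(\dagger)$ are accurate clarifications that the paper leaves tacit.
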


Since the Miller forcing satisfies
$(\dagger)$~\cite[Lemma~2.5]{RepZd2019}, we have the following corollary answering \cite[Question~1.10]{Zdo20}.

\begin{cor}\label{cor_miller1}
In the Miller model, every Hurewicz totally imperfect space has size at most $\w_1$.
\end{cor}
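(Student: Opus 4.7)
The plan is to apply Corollary~\ref{cor_dagger1} directly, after checking that the Miller model fits its hypotheses. Recall that the Miller model is the extension $V^{\IP_{\w_2}}$ obtained from a ground model of GCH via a countable support iteration $\la \IP_\alpha, \dot{\IQ}_\alpha : \alpha < \w_2\ra$ in which every $\dot{\IQ}_\alpha$ is forced to be the Miller (superperfect tree) forcing of the intermediate extension.

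First I would check the size condition. Since the iteration is proper and preserves $\w_1$, and since GCH in the ground model together with properness imply that each intermediate model $V^{\IP_\alpha}$ satisfies $2^\w = \w_1$ in a cofinal set of stages (indeed, throughout the iteration one has $|\IP_\alpha|\leq\w_1$ by bookkeeping, hence $|\mathbb{R}\cap V^{\IP_\alpha}|\leq\w_1$), the Miller forcing in each intermediate model is coded by a subset of the reals and therefore has cardinality at most $\w_1$. Thus $\mathbbm{1}_{\IP_\alpha}\forces "|\dot{\IQ}_\alpha|\leq\w_1"$ for every $\alpha < \w_2$.

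Second, the $(\dagger)$ condition: by \cite[Lemma~2.5]{RepZd2019}, already cited in the paragraph just before the corollary, the Miller forcing satisfies $(\dagger)$ in any model, and in particular $\mathbbm{1}_{\IP_\alpha}\forces "\dot{\IQ}_\alpha\text{ satisfies }(\dagger)"$. Hence the iteration $\la \IP_\alpha,\dot{\IQ}_\alpha:\alpha\leq\w_2,\beta<\w_2\ra$ verifies the hypotheses of Corollary~\ref{cor_dagger1}, which yields that in the Miller model every Hurewicz totally imperfect space has size at most $\w_1$.

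The only non-routine point is arguably bookkeeping the cardinality of the iterands, but this is standard for countable support iterations of definable proper forcings starting from GCH; no new ideas are needed beyond what has already been assembled. There is no genuine obstacle at this stage, since the two substantive ingredients — preservation of $(\dagger)$ under countable support iterations (Theorem~\ref{preservation_dagger}) and the implication ``universally meager bound $\Longrightarrow$ Hurewicz totally imperfect bound'' via \cite[Proposition~2.3]{UM1} — have already been established in the preceding section.
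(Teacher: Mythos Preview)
Your proposal is correct and follows essentially the same approach as the paper: the paper simply notes that the Miller forcing satisfies $(\dagger)$ by \cite[Lemma~2.5]{RepZd2019} and invokes Corollary~\ref{cor_dagger1}, with the size hypothesis $|\dot{\IQ}_\alpha|\leq\w_1$ being implicit from GCH in the ground model. Your version just spells out these routine verifications in more detail.
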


By \cite[Theorems 9 and 11]{perfectlymeager2002} (see also 
\cite[Theorem 17]{perfectlymeager2002} for a more general result)
every perfectly meager subspace is universally meager 
in the Miller model. 

\begin{cor}\label{cor_miller1_pm}
In the Miller model, every perfectly meager subspace has size at most $\w_1$.
\end{cor}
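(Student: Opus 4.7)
The plan is to obtain this as a short chain of two implications, both of which have already been assembled in the paper. The content of Corollary~\ref{cor_miller1_pm} is essentially a restatement of Theorem~\ref{main_result} restricted to the perfectly meager class, once the gap between ``perfectly meager'' and ``universally meager'' is closed in the Miller model.

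First, I would recall Bartoszy\'nski's result \cite[Theorems 9 and 11]{perfectlymeager2002} (already cited in the paragraph preceding the corollary): in the Miller model, every perfectly meager subspace of $\Cantor$ is universally meager. So it suffices, letting $X\sub\Cantor$ be perfectly meager in the Miller model, to bound $|X|\leq \w_1$ as a \emph{universally meager} subspace.

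For the bound, I would invoke Theorem~\ref{main_result}. The Miller model is a forcing extension of a model of GCH by a countable support iteration $\la\mathbb{P}_\alpha,\dot{\mathbb{Q}}_\alpha : \alpha < \w_2\ra$, where each $\dot{\mathbb{Q}}_\alpha$ is forced to be (a name for) the Miller forcing. Since CH is preserved in the intermediate models (a standard consequence of properness together with $\w^\w$-boundedness-style arguments for Miller forcing), each $\dot{\mathbb{Q}}_\alpha$ has size $\leq\w_1$ in $V^{\mathbb{P}_\alpha}$, so the cardinality hypothesis of Theorem~\ref{main_result} is met. The property $(\dagger)$ hypothesis is given by \cite[Lemma~2.5]{RepZd2019}, which states that Miller forcing satisfies $(\dagger)$. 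Thus Theorem~\ref{main_result} applies and yields $|X|\leq \w_1$.

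There is no substantive obstacle here, since all of the work has already been done: the preservation result for $(\dagger)$ under countable support iterations is Theorem~\ref{preservation_dagger}, its consequence for universally meager sets is Theorem~\ref{main_result}, and the class-comparison in the Miller model is Bartoszy\'nski's \cite[Theorems 9 and 11]{perfectlymeager2002}. Conceptually, this corollary is the perfectly-meager analogue of Corollary~\ref{cor_miller1}, and its proof parallels that of Corollary~\ref{cor_miller1} verbatim, with the inclusion ``Hurewicz totally imperfect $\Rightarrow$ universally meager'' replaced by ``perfectly meager $\Rightarrow$ universally meager (in the Miller model).''
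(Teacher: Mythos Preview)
Your proposal is correct and matches the paper's approach exactly: the paper derives the corollary immediately from Bartoszy\'nski's result \cite[Theorems 9 and 11]{perfectlymeager2002} (perfectly meager $\Rightarrow$ universally meager in the Miller model) together with the already-established bound on universally meager subspaces from Theorem~\ref{main_result} (equivalently, Theorem~\ref{thm:UM}). Your verification that the Miller iteration meets the hypotheses of Theorem~\ref{main_result} is a bit more explicit than the paper, which simply takes this for granted at this point, but the substance is identical.
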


A \emph{Luzin set} is an uncountable subset of $\Cantor$ which has countable intersection with every meager subset of $\Cantor$.
Let us note that by
\cite[P. 529, Theorem~2]{Kur66} attributed there to Luzin,
every Luzin set admits a one-to-one map onto a perfectly meager
subspace, hence in the Cohen model there exists a perfectly meager subspace of size $\w_2=\hot c$, since the ``standard''
Cohen reals form a Luzin set of size $\w_2$, in this model. 
On the other hand, in this model there is no universally meager subspace 
of size $\w_2$ by \cite[p.~577, Theorem]{Mil83}, see the discussion after Problem~\ref{prob:6.5} for more details. Thus, the non-existence of universally meager subspaces of size $\hot c$ does not imply that also all perfectly meager subspaces have size less than $\hot c$.

%%%%%%%%%%%%%%%%%%%%%%%%%%%%%%%%%%%%%%%%%%%%%%%%%%%%%%%%%%%%%%%%%%%
%%%%%%%%%%%%%%%%%%%%%%%%%%%%%%%%%%%%%%%%%%%%%%%%%%%%%%%%%%%%%%%%%%%%

\section{Rothberger spaces in some models of $\mathfrak{u} < \mathfrak{g}$}

The smallest cardinality $|\mathcal B|$ of a base 
$\mathcal B$ of an ultrafilter on $\w$ is denoted by $\hot u$.
For sets $Y$ and $Y'$, we write $Y'\as Y$, if $|Y'\setminus Y|<\w$.
A family $A\sub [\w]^\w$ is called \emph{groupwise dense}
if $Y\in A$ and $Y'\as Y$, where $Y'\in\roth$, implies $Y'\in A$, and for every sequence
$\la K_n:n\in\w\ra$ of mutually disjoint non-empty finite subsets of $\w$ there exists $I\in[\w]^\w$ with $\bigcup_{n\in I}K_n\in A$.
The smallest cardinality of a collection $\mathcal A$
of groupwise dense families with $\bigcap \mathcal A=\emptyset$
is denoted by $\fg$.
It is known that $\w_1\leq \hot u$ and $\hot g\leq\hot c$ (see~\cite{Bla10} for more information on these and other 
cardinal characteristics of the continuum).

The inequality $\hot u<\hot g$ 
is known to be consistent \cite{BlassLaflamme} and to have several consequences 
for combinatorial covering properties. 
For instance, it implies that every Rothberger space is 
Hurewicz, see \cite[Theorem~5]{SemifilterZdo} or 
\cite[Theorems 2.2 and 3.1]{TsaZdo08}. It is folklore 
that Rothberger spaces are totally imperfect because
the Rothberger property is preserved by closed subspaces and 
the Cantor space $2^\w$ is not Rothberger,  see, e.g., \cite[Theorem~2.3]{coc2}
for a considerably stronger result. Thus,
$\hot u<\hot g$ implies that Rothberger spaces are Hurewicz totally imperfect,
and hence the next fact follows directly from 
Corollary~\ref{cor_dagger1}.

\begin{cor}\label{cor_dagger2}
Let $\IP$ be the limit of an iteration like in Corollary~\ref{cor_dagger1}
such that $V^{\IP}\vDash(\hot u<\hot g)$. Then in 
$V^{\IP}$, every Rothberger space has size at most $\w_1$.
\end{cor}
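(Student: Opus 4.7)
The plan is to chain together exactly the three ingredients assembled in the paragraph immediately preceding the statement, with Corollary~\ref{cor_dagger1} doing the heavy lifting. First I would fix an arbitrary Rothberger space $X$ in $V^{\IP}$ and argue that it is necessarily Hurewicz and totally imperfect, so that Corollary~\ref{cor_dagger1} applies. The hypothesis $V^{\IP}\vDash(\hot u<\hot g)$ is used only at this step.

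For the Hurewicz property, I would quote \cite[Theorem~5]{SemifilterZdo} (equivalently, \cite[Theorems~2.2 and~3.1]{TsaZdo08}), which asserts that under $\hot u<\hot g$ every Rothberger space is Hurewicz; this is legitimate inside $V^{\IP}$ by hypothesis. For total imperfectness, I would recall the folklore fact noted in the excerpt: the Rothberger property passes to closed subspaces, and $2^\w$ is not Rothberger (cf.\ \cite[Theorem~2.3]{coc2}); hence no copy of $2^\w$ can sit inside a Rothberger space. Both statements are provable in ZFC once $\hot u<\hot g$ is known to hold in the model.

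Having established that $X$ is a Hurewicz totally imperfect subspace of $\Cantor$ in $V^{\IP}$, I would invoke Corollary~\ref{cor_dagger1}: since $\IP$ is the countable support limit of an iteration of forcings of size $\le\w_1$ satisfying property $(\dagger)$, every Hurewicz totally imperfect space in $V^{\IP}$ has cardinality at most~$\w_1$. Applying this to $X$ yields $|X|\leq\w_1$, and since $X$ was an arbitrary Rothberger space the conclusion follows.

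There is essentially no obstacle; the statement is a one-line corollary whose only content is the bookkeeping: the cardinal-characteristic hypothesis $\hot u<\hot g$ bridges ``Rothberger'' and ``Hurewicz totally imperfect,'' while Corollary~\ref{cor_dagger1} (ultimately Theorem~\ref{main_result} together with the Hurewicz-totally-imperfect implies universally meager fact from \cite[Proposition~2.3]{UM1}) supplies the size bound for the latter class.
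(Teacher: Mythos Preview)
Your proposal is correct and follows exactly the paper's own argument: the paragraph preceding the corollary records that $\hot u<\hot g$ forces Rothberger spaces to be Hurewicz (via \cite[Theorem~5]{SemifilterZdo} or \cite[Theorems~2.2 and~3.1]{TsaZdo08}) and that Rothberger spaces are totally imperfect in ZFC, after which Corollary~\ref{cor_dagger1} gives the size bound. One tiny remark: total imperfectness does not require $\hot u<\hot g$, so your sentence ``Both statements are provable in ZFC once $\hot u<\hot g$ is known'' slightly overstates the dependence, but this does not affect the argument.
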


Since $\hot u<\hot g$ holds  in the Miller model
(this is well-known and has been mentioned in the proof of  \cite[Theorem~2]{BlassLaflamme}, 
below we prove a more general Lemma~\ref{u<g_in_gjs}),
we conclude that Theorem~\ref{main_miller} below 
is a direct consequence of Corollary~\ref{cor_dagger2} and from the above discussion.

\bthm\label{main_miller}
In the Miller model, every Rothberger space has size at most $\w_1$.
More precisely, in this model
every Hurewicz totally imperfect space has size 
at most $\w_1$ and every Rothberger space is Hurewicz totally imperfect.
\ethm

 Let us note that the condition $\hot u<\hot g$ cannot be 
omitted here because the  Cohen forcing satisfies $(\dagger)$,
and the 
$\w_2$ many Cohen generics $c_\alpha$ ``directly'' added by the iteration of Cohen forcing with countable supports form a Rothberger space. Indeed, 
the set $C=\{c_\alpha:\alpha<\w_2\}$ is $\w_2$-concentrated on any countable dense subset of $2^\w$ and $\mathrm{cov}(\mathcal M)=\w_2=\hot c$ in the corresponding forcing extension, so it suffices to apply\footnote{We do not know whether 
 $C$ is a Luzin set. } \cite[Theorem~5]{FreMil}.

%%%%%%%%%%%%%%%%%%%%%%%%%%%%%%%%%%%%%%%%%%%%%%%%%%%%%%%%%%%%%%%%%%%%
%%%%%%%%%%%%%%%%%%%%%%%%%%%%%%%%%%%%%%%%%%%%%%%%%%%%%%%%%%%%%%%%%%%%%

Next, we shall analyze 
iterations of length $\w_2$ of proper forcings with countable supports
introduced  by 
Goldstern, Judah and Shelah  in \cite{GJS1993} where they studied  how to create
stong measure zero sets of size $\hot c>\w_1$ without adding Cohen reals. 
In what follows we shall refer to such an iteration as to a GJS-iteration.
 We shall show that
 iterands in a GJS-iteration (and therefore also the final poset) satisfy 
$(\dagger)$, and by a combination of several well-known results we also have that 
$\hot u<\hot g$ holds in the corresponding extensions.  Thus, 
GJS-iterations will be shown to be within the scope of Corollary~\ref{cor_dagger2}.

More precisely, we say that
 an iteration $\la\IP_\alpha,\dot{\IQ}_\alpha:\alpha < \w_2\ra$ is a \emph{GJS-iteration} if
for cofinally many $\alpha\in\w_2$ we have that
$\dot{\IQ}_\alpha$ is forced to be the Miller forcing;
the other iterands $\dot{\IQ}_\alpha$ are ($\IP_\alpha$-names for) posets of the form $\mathrm{PT}_H$ depending on a
function $H:\w\to ([\w]^{<\w}\setminus[\w]^{\leq 1})$ which is provided by some bookkeeping procedure making sure that for each such $H$ in the final model
the poset $\mathrm{PT}_H$ is used as $\IQ_\alpha$ for cofinally many $\alpha$.
Define $\mathrm{PT}_H$ as the set of all subtrees $p\in\w^{<\w}$ such that
\begin{enumerate}
\item $\forall \eta \in p \: \forall l \in \mathrm{dom}(\eta) \enskip (\eta(l) \in H(l)),$
\item $\forall \eta \in p \enskip (|\mathrm{succ}_p(\eta)| \in \{1,|H(|\eta|)|\}),$
\item $\forall \eta \in p \: \exists \nu \in p \enskip (\eta \subseteq \nu \text{ and } |\mathrm{succ}_p(\nu)| = |H(|\nu|)|).$
\end{enumerate}
For $p_1,p_2\in \mathrm{PT}_H$ let
$p_1\leq p_0$, i.e., $p_1$ is a stronger condition than $p_0$,
if and only if $p_1\sub p_0$.

 In \cite[Corollary~2.14]{GJS1993} it was shown that $\mathrm{PT}_H$ is a proper $\w^\w$-bounding forcing notion. 
 We shall prove that it also satisfies $(\dagger)$ following the approach used in \cite[Lemma~2.5]{RepZd2019}. For this we need
 to introduce some auxiliary notations describing the fusion of
 conditions in $\mathrm{PT}_H$.

For $p \in \mathrm{PT}_H$ the set of all splitting nodes is defined by 
\[
\Split(p) := \{\eta \in p : |\mathrm{succ}_p(\eta)| > 1\}.
\]
Moreover, 
$$\Split_k(p) := \{\eta \in \Split(p) : |\{\nu \subsetneq \eta: \nu \in\Split(p)\}| = k\}.$$
For $p,q \in \mathrm{PT}_H$ we write $p \leq_k q$ if $p \leq q$ and $\Split_k(q) \subseteq p$. If we have a sequence $\la p_k : k \in \w\}$ with $p_{k+1} \leq_k p_k$, let $p_\w := \bigcap_{k \in \w} p_k \in \mathrm{PT}_H$ be the fusion of this sequence, which clearly satisfies $p_\w \leq_k p_k$ for all $k \in \w$. 

\blem \label{pth_dagger}
$\mathrm{PT}_H$ satisfies $(\dagger)$.
\elem
\bpf
Let $M \ni H$ and $\{\varphi_i: i \in \w\}$ be as in the formulation of $(\dagger)$. Suppose that $p \in M \cap \mathrm{PT}_H$ and $\{D_n: n \in \w\}$ is an enumeration of all open dense subsets of $\mathrm{PT}_H$ lying in  $ M$. Set $p_0 := p$. Suppose $\la p_n: n \leq k\ra$ has been constructed such that $p_n \leq_{n-1} p_{n-1}$ and for all $t \in \Split_n(p_n)$ we have $(p_n)_t \in  M$. 

For every $t \in \Split_k(p_k)$ and $m \in H(|t|)$, there is $R_{t,m} \in D_n \cap M$ such that $R_{t,m} \leq \varphi_k((p_k)_{t \bigvid m})$.
Let
\[
p_{k+1} := \bigcup \{R_{t,m}: t \in \Split_k(p_k), m \in H(|t|)\}.
\]
Note that $p_{k+1} \leq_k p_k$ and for all $r \in \Split_{k+1}(p_{k+1})$ there exist $t$ and $m$ such that $(p_{k+1})_r = R_{t,m} \in M$.
Let $p_\w := \bigcap_{k \in \w} p_k$ be the fusion. 
We have 
\[
p_\w \leq p_{k+1} \forces "\exists r \in \Split_{k+1}(p_{k+1}) \: \exists t,m \enskip ((p_{k+1})_r = R_{t,m} \in D_k \cap M \cap \Gamma``
\]
for all $k \in \w$.
In particular, $p_\w \forces "\exists t,m \enskip (\varphi_k((p_k)_{t \bigvid m}) \in \Gamma``$. 
Thus, $\mathrm{PT}_H$ satisfies $(\dagger)$. 
\epf

Lemma~\ref{pth_dagger}, Theorem~\ref{main_result}, the fact that the Miller forcing satisfies $(\dagger)$, \cite[Lemma~2.5]{RepZd2019}, and the definition of GJS-iterations imply the following fact.

\begin{cor}\label{gjs_sat_dagger}
Let $\la\IP_\alpha,\dot{\IQ}_\alpha:\alpha < \w_2\ra$ be a GJS-iteration. Then $\dot{\IQ}_\alpha$ is forced to satisfy 
$(\dagger)$ for all $\alpha<\w_2$, and hence also $\IP_{\w_2}$
satisfies $(\dagger)$.
\end{cor}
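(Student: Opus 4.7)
The plan is to reduce the statement directly to Theorem~\ref{preservation_dagger}, which asserts that countable support iterations preserve $(\dagger)$. The only thing that needs to be checked is that each iterand $\dot{\IQ}_\alpha$ is forced to satisfy $(\dagger)$ in the intermediate extension $V[G_\alpha]$.

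By the definition of a GJS-iteration, the interpretation $\dot{\IQ}_\alpha^{G_\alpha}$ is, for every $\alpha < \w_2$, either the Miller forcing or a tree poset $\mathrm{PT}_H$ for some $H\colon\w\to[\w]^{<\w}\setminus[\w]^{\leq 1}$ provided by the bookkeeping in $V[G_\alpha]$. For the Miller forcing, property $(\dagger)$ is exactly \cite[Lemma~2.5]{RepZd2019}; for $\mathrm{PT}_H$, it is Lemma~\ref{pth_dagger}. Both statements are proved by a purely combinatorial fusion argument using only the parameter defining the forcing and a countable elementary submodel, so they transfer to any ground model (in particular to $V[G_\alpha]$) containing the relevant data. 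Hence $\mathbbm{1}_{\IP_\alpha}\forces "\dot{\IQ}_\alpha\text{ satisfies }(\dagger)"$ for every $\alpha<\w_2$.

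With the hypotheses of Theorem~\ref{preservation_dagger} verified, its conclusion gives $(\dagger)$ for $\IP_{\w_2}$, finishing the argument. I do not foresee any significant obstacle: the technical content is already packaged in Lemma~\ref{pth_dagger} and Theorem~\ref{preservation_dagger}. The only point requiring any care is the absoluteness remark in the previous paragraph, namely that the proofs of $(\dagger)$ for Miller and for $\mathrm{PT}_H$ really do go through uniformly in the model, so that one gets $(\dagger)$ forced in $V[G_\alpha]$ rather than merely in the ground model.
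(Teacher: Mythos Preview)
Your proposal is correct and follows essentially the same route as the paper: the paper simply records that the corollary follows from Lemma~\ref{pth_dagger}, the fact that Miller forcing satisfies $(\dagger)$ (\cite[Lemma~2.5]{RepZd2019}), the preservation theorem for countable support iterations, and the definition of GJS-iterations. Your added remark about the absoluteness of the fusion arguments is a reasonable clarification but not a departure from the paper's line of reasoning.
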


The next result may be thought of as a folklore.

\blem\label{u<g_in_gjs}
Let $\la\IP_\alpha,\dot{\IQ}_\alpha:\alpha < \w_2\ra$ be a GJS-iteration over a ground model $V$ of CH. Then $\hot u<\hot g$ holds in $V^{\IP_{\w_2}}$.
\elem
\bpf
Let $G$ be $\IP_{\w_2}$-generic filter over $V$.
The equality  $\hot u=\w_1$ holding in $V[G]$ follows from
the fact that every $P$-point $\mathcal U\in V$ is preserved by $\IP_{\w_2}$, i.e., it generates 
an ultrafilter in $V[G]$. This in its turn can be shown by  
combining  several well-known results. Firstly,
$P$-points are preserved by posets $\mathrm{PT}_H$ according to
\cite[Lemma~2.22]{GJS1993}. The preservation of 
$P$-points by the Miller forcing has been proven in 
\cite[Prop.~4.1]{Mil84}, see also \cite[Lemma~10]{BlassShelah1989}
for an alternative proof. Finally, countable support iterations of 
proper posets preserving $P$-points also preserve $P$-points by
\cite[Theorem~4.1]{BlaShe87}.

Next, we shall show that $V[G]$ also satisfies $\hot g=\w_2$.
For this we need two standard facts.

\begin{claim} \label{lem_w^w-bounded}
Let $\mathbb{R}$ be $\w^\w$-bounding and let $H$ be a $\bbR$-generic filter over $V$.
If $\la F_n : n \in \w \ra \in V[H]$ is a sequence of finite, non-empty, disjoint subsets of $\w$, then there exists a sequence $\la K_m : m \in \w \ra \in V$ of finite, non-empty, disjoint subsets of $\w$ such that for every $m \in \w$ there is some $n$ such that $F_n \subseteq K_m$.
\end{claim}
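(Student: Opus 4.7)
The plan is to use $\w^\w$-boundedness to dominate a suitably chosen function in $V[H]$ by a ground model function, and then to read off the endpoints of the desired $K_m$'s from the iterates of that function.

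First, working in $V[H]$, I define $f\colon\w\to\w$ by setting $f(k):=1+\max F_{n(k)}$, where $n(k)$ is the least index $n$ for which $\min F_n>k$. Such an $n$ exists because disjointness of the $F_n$'s forces all but finitely many of them to lie entirely in $(k,\infty)$. By the $\w^\w$-bounding hypothesis on $\mathbb R$, there is some $g\in V$ with $g(k)\geq f(k)$ for every $k$; by replacing $g(k)$ with $\max\{g(k),k+1\}$ if necessary, I may further assume that $g$ is strictly increasing and $g(k)>k$.

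Back in $V$, I define recursively $a_0:=0$ and $a_{m+1}:=g(a_m)$, and set $K_m:=[a_m,a_{m+1})\cap\w$. These intervals are finite and non-empty since $a_{m+1}=g(a_m)>a_m$, are pairwise disjoint by construction, and obviously lie in $V$. The verification of the key property is then immediate: the definition of $f(a_m)$ provides an index $n$ with $a_m<\min F_n$ and $\max F_n<f(a_m)\leq g(a_m)=a_{m+1}$, so $F_n\sub(a_m,a_{m+1})\sub K_m$, as required.

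The only real subtlety is the choice of $f$. A naive candidate such as $f(n):=\max F_n$ carries too little information, since the sequence $\seq{F_n}{n\in\w}$ can be interleaved in $\w$ in arbitrary ways, and a ground-model dominating function would not tell us which $F_n$'s actually fall inside a chosen ground-model interval. The definition above sidesteps this by having $f(k)$ directly witness a single block $F_{n(k)}$ sitting entirely past $k$; the $\w^\w$-bounding hypothesis then converts this into a ground-model upper bound, which is exactly what is needed to sandwich some $F_n$ between consecutive terms of $(a_m)$.
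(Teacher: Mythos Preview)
Your proof is correct and follows the same overall strategy as the paper's: define a function in $V[H]$ encoding where the $F_n$'s sit, dominate it by a ground-model function, and take the $K_m$ to be intervals between successive iterates of that function. Your choice of $f$ is a bit slicker than the paper's: the paper first passes to a monotone subsequence $\langle F_{n_m}\rangle$ with $\max F_{n_m}<\min F_{n_{m+1}}$, dominates $g(m)=\max F_{n_m}$ by some $f\in V$, and then must iterate via $h(n+1)=f(h(n)+1)+1$ to guarantee that two consecutive $g$-values (and hence an entire $F$-block) fall in each interval; your $f(k)$ already records that some $F_n$ lies entirely in $(k,f(k))$, so the plain iteration $a_{m+1}=g(a_m)$ suffices and no preliminary subsequence is needed.

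One minor quibble: the replacement $g(k)\mapsto\max\{g(k),k+1\}$ guarantees $g(k)>k$ but not strict monotonicity. This is harmless, since your argument only uses $g(k)>k$ (to make the $K_m$ non-empty) and $g\geq f$ (for the containment), never monotonicity itself.
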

\bpf
Pick a subsequence $\la F_{n_m}: m \in \w\ra$ such that $\max F_{n_m} < \min F_{n_{m+1}}$.

In $V$, for every $f \in V \cap \w^{\uparrow \w}$ define 
\[
K^f_0:=[0,f(0))\quad\text{ and }\quad K^f_m := [f(m-1),f(m))
\]
for all $m>0$ and 
\[
K^f := \la K_m^f: m \in \w \ra \in V
\]

Let $g \in \w^{\uparrow \w}$ be a function in $V[H]$, such that $g(m)= \max_{F_{n_m}}$.
Note that for all $m \in \w$ we have $F_{n_m} \subseteq [g(m-1),g(m)]$.
Since $\mathbb{R}$ is $\w^\w$-bounding, there is a function $f \in \w^{\uparrow \w} \cap V$ with $g \leq f$. 
Define $h \in \w^{\uparrow \w} \cap V$ recursively by setting 
\[
h(0) := f(0)\quad\text{ and }\quad h(n+1) := f(h(n)+1)+1.
\]
We have
\[
h(n) < g(h(n)) < g(h(n)+1) \leq f(h(n)+1) < h(n+1)
\]
for all $n \in \w$
Finally, consider $K^h: = \la K^h_m: m \in \w \ra \in V$.
We have
\[
F_{n_{h(n)+1}} \subseteq [g(h(n)),g(h(n)+1)] \subseteq [h(m),h(m+1)) = K^h_m
\]
for all $m \in \w$.
\epf
\begin{claim} \label{prop_w^w-bounded}
Let $\mathbb{R}$ be $\w^\w$-bounding and let $H$ be a $\mathbb{R}$-generic filter.
If $A$ is groupwise dense in $V$, then the family 
\[
(\downarrow A)^{V[H]}:= \{b \in [\w]^\w: \exists a \in A \enskip (b \subseteq a)\}
\]
is groupwise dense in $V[H]$.
\end{claim}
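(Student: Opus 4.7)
The plan is to verify the two defining conditions of groupwise density for $(\downarrow A)^{V[H]}$ in $V[H]$. For closure under $\as$, I first observe that $A$ itself is closed under \emph{finite enlargements}: if $a \in A$ and $F \sub \w$ is finite, then $(a \cup F) \sm a \sub F$ is finite, so $(a \cup F) \as a$, and the $\as$-closure of $A$ gives $a \cup F \in A$. Then given $b \in (\downarrow A)^{V[H]}$ witnessed by some $a \in A$ with $b \sub a$, and given $b' \in [\w]^\w \cap V[H]$ with $b' \as b$, the finite set $F := b' \sm a$ (which is contained in $b' \sm b$) satisfies $b' \sub a \cup F \in A$, so $b' \in (\downarrow A)^{V[H]}$.

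For the partition-block clause, I take an arbitrary sequence $\la K_n : n \in \w \ra \in V[H]$ of pairwise disjoint nonempty finite subsets of $\w$ and invoke Claim \ref{lem_w^w-bounded} to obtain a coarsening $\la L_m : m \in \w \ra \in V$ of pairwise disjoint nonempty finite subsets of $\w$ such that for every $m$ there is some $n$ with $K_n \sub L_m$. Using the groupwise density of $A$ in $V$, I pick $I \in [\w]^\w \cap V$ with $\bigcup_{m \in I} L_m \in A$, and for each $m \in I$ I choose some $n(m)$ with $K_{n(m)} \sub L_m$.

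The one point requiring care is that $J := \{n(m) : m \in I\} \in V[H]$ must be shown to be infinite. This follows from the pairwise disjointness of $\la L_m : m \in \w \ra$: if $K_n$ is nonempty and $K_n \sub L_{m_1} \cap L_{m_2}$, then $m_1 = m_2$, so the assignment $m \mapsto n(m)$ is injective on $I$. Consequently $\bigcup_{n \in J} K_n \sub \bigcup_{m \in I} L_m \in A$, which witnesses $\bigcup_{n \in J} K_n \in (\downarrow A)^{V[H]}$ and completes the verification. No substantial obstacle arises; the argument is essentially a direct transfer via Claim \ref{lem_w^w-bounded}, with the disjointness of the blocks $L_m$ automatically supplying the injectivity needed to keep the lifted index set infinite.
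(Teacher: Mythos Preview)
Your proof is correct and follows essentially the same approach as the paper's: both invoke Claim~\ref{lem_w^w-bounded} to pull the block sequence down to the ground model, apply groupwise density of $A$ there, and push back up. You actually spell out a detail the paper leaves implicit, namely the injectivity of $m \mapsto n(m)$ (via disjointness of the $L_m$'s) needed to ensure $J$ is infinite; the paper simply asserts $J \in [\w]^\w$ without comment.
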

\bpf
The set $(\downarrow A)^{V[H]}$ is clearly closed downwards and closed under finite modifications.
Let $\la F_n : n \in \w \ra \in V[H]$ be a sequence of finite, non-empty, disjoint subsets of $\w$.
By Claim~\ref{lem_w^w-bounded} pick the corresponding $\la K_m : m \in \w \ra \in V$ such that for all $m \in \w$ there is $n_m \in \w$ with $F_{n_m} \subseteq K_m$.
Since $A$ is groupwise dense, there is a set $I \in [\w]^\w$ such that $\bigcup_{m \in I} K_m \in A$.
Let $J := \{n_m: m \in I\} \in [\w]^\w$.
We have $\bigcup_{j \in J} F_j \subseteq \bigcup_{m \in I} K_m$, and thus $\bigcup_{j \in J} F_j \in (\downarrow A)^{V[H]}$. 
\epf

The fact that the  Miller forcing can be used to enlarge $\hot g$
has been noted at the end of the proof of 
\cite[Theorem~2]{BlassLaflamme}. For our purposes it will be 
convenient to use the following direct consequence of 
\cite[Lemma~7]{BlassShelah1989}. In what follows we shall
denote by $\miller$ the Miller generic real, i.e.,
$\miller=\bigcup\bigcap H$, where $H$ is an $\mathbb M$-generic filter.

\begin{claim}\label{lem_Lemma_7}
For each $p_0\in\mathbb M$ there exist $h\in \w^{\uparrow\w}$
and $p_1\leq p_0$ such that for every $I\in [\w]^\w$ there exists 
$q\leq p_1$ forcing 
\[
\rng(\miller)\sub\bigcup_{n\in I}[h(n),h(n+1)).
\]
\end{claim}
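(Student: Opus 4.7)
Since the statement is attributed to \cite[Lemma~7]{BlassShelah1989}, my plan is to reprove it as a standard Miller fusion argument that simultaneously constructs the condition $p_1\leq p_0$ and the function $h\in\w^{\uparrow\w}$, so that the resulting tree has a ``stratified'' structure aligning its values with the intervals $[h(n),h(n+1))$. Concretely, I would build a fusion sequence $p_0\geq p_0^*\geq p_1^*\geq\cdots$ with $p_{n+1}^*\leq_n p_n^*$ in the standard Miller fusion order (preserving $\Split_k$ for $k\leq n$), setting $p_1=\bigcap_n p_n^*$. Alongside, choose integers $0=h(0)<h(1)<\cdots$ at stage $n$ so that every $t\in\Split_n(p_1)$ has length $<h(n+1)$, every successor $m\in\mathrm{succ}_{p_1}(t)$ satisfies $m\geq h(n+1)$, and the successor set of each such $t$ meets every interval $[h(k),h(k+1))$ for $k>n$ in an infinite set.

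The crucial extra property to arrange in the fusion is a \emph{path-interval compatibility} condition: for $t\in\Split_n(p_1)$ and $m\in\mathrm{succ}_{p_1}(t)$ with $m\in[h(k),h(k+1))$, the unique segment in $p_1$ from $t\concat{m}$ up to the next splitting node in $\Split_{n+1}(p_1)$ takes all its values in $[h(k),h(k+1))$. At stage $n+1$, for each $t\in\Split_n(p_n^*)$ one first fixes $h(n+1)$ large enough that discarding $\mathrm{succ}_{p_n^*}(t)\cap[0,h(n+1))$ leaves infinitely many successors in each prospective interval $[h(k),h(k+1))$ ($k>n$), and then further thins each block so that the non-splitting continuation up to $\Split_{n+1}$ stays in the same interval as the chosen successor. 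Since Miller trees are superperfect, the preimage in $\mathrm{succ}_{p_n^*}(t)$ of each interval $[h(k),h(k+1))$ is infinite (after choosing $h(n+1)$ large), and in it infinitely many $m$ keep the corresponding continuation inside $[h(k),h(k+1))$, because $h(k+1)$ can be chosen large after the block is inspected; this is the main technical step and the place where one must interleave the choice of $h$ with the pruning, which is the principal obstacle.

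Given $p_1$ and $h$ with these properties, the verification is immediate. Let $I\in[\w]^\w$. Define $q\leq p_1$ by retaining only those nodes of $p_1$ whose values past the stem lie in $\bigcup_{k\in I}[h(k),h(k+1))$. Equivalently, at each $t\in\Split(p_1)$ we keep the successors $m\in\bigcup_{k\in I}[h(k),h(k+1))$ and the corresponding unique non-splitting continuations to the next element of $\Split(p_1)$. By path-interval compatibility these continuations stay in $I$-indexed intervals, and by the density condition the retained successor set is infinite at every splitting node, so $q$ is a superperfect tree, i.e.\ $q\in\mathbb M$ and $q\leq p_1$. Since every node $\eta\in q$ satisfies $\mathrm{rng}(\eta)\setminus\mathrm{rng}(\mathrm{stem}(p_1))\subseteq\bigcup_{k\in I}[h(k),h(k+1))$ and one can absorb the finitely many values of the stem into the zeroth interval by taking $h(0)=0$ (or by a harmless initial shift of $h$), the condition $q$ forces $\mathrm{rng}(\miller)\subseteq\bigcup_{n\in I}[h(n),h(n+1))$, as required.
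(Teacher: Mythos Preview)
The paper does not prove this claim; it simply quotes it as a direct consequence of \cite[Lemma~7]{BlassShelah1989}. Your overall plan---build $p_1$ and $h$ simultaneously so that passage from any splitting node to each of its splitting successors is confined to a single $h$-interval, and then prune according to $I$---is exactly the right idea and is essentially what Blass--Shelah do. However, the specific construction you describe does not work as written.

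The problem is your alignment of the splitting level $\Split_n(p_1)$ with the interval index $n$. You require that every $t\in\Split_n(p_1)$ have successors meeting \emph{every} interval $[h(k),h(k+1))$ with $k>n$. But in a Miller tree $\Split_n(p_1)$ is infinite for $n\ge 1$, and by your own path--interval compatibility condition, a node $t'\in\Split_{n+1}(p_1)$ reached from $t\in\Split_n(p_1)$ through interval $k$ has $\max(\rng(t'))\in[h(k),h(k+1))$ with $k$ ranging over \emph{all} values $>n$. If $k\ge n+3$, every successor of $t'$ exceeds $h(k)\ge h(n+3)$, so none can lie in $[h(n+2),h(n+3))$; this contradicts your requirement at level $n+1$. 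Relatedly, your sentence ``fixes $h(n+1)$ large enough that discarding \ldots\ leaves infinitely many successors in each prospective interval $[h(k),h(k+1))$ ($k>n$)'' is circular, since $h(k)$ for $k>n+1$ is not yet defined at that stage, and the intersection of the successor set with a finite interval cannot be infinite in any case.

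The fix is to index the construction by \emph{interval}, not by splitting level: maintain a finite set $T_n$ of splitting nodes of $p_0$, each with $\max(\rng(t))<h(n+1)$; at stage $n+1$, for every $t\in T_0\cup\cdots\cup T_n$ choose one immediate successor $m_t\ge h(n+1)$, follow it in $p_0$ to the next splitting node $t'$, and put $t'$ into $T_{n+1}$; then set $h(n+2)$ above $\max\{\max(\rng(t')):t'\in T_{n+1}\}$. The tree $p_1$ with $\Split(p_1)=\bigcup_n T_n$ then has, for each splitting node $t$ with $\max(\rng(t))<h(j+1)$, exactly one splitting successor through each interval $k>j$, and your verification with $I$ goes through. (Your remark about the stem is also not quite right: taking $h(0)=0$ does not help when $0\notin I$, so literally one only gets $\rng(\miller)\setminus\rng(\mathrm{stem}(p_1))\subseteq\bigcup_{n\in I}[h(n),h(n+1))$; this finite defect is harmless for the application to groupwise dense families, which are closed under finite modification.)
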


\begin{claim}\label{lem_g_Miller}
Let $A\sub\roth$ be a groupwise dense family in $V$ and  $H$ be an $\bbM$-generic filter over $V$.
Then in $V[H]$, we have 
\[
\rng(\miller)\sub(\downarrow A)^{V[H]}.
\]
\end{claim}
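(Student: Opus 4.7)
The plan is to show that for every $p_0\in\mathbb{M}$ the set of conditions $q\leq p_0$ forcing $\rng(\miller)\subseteq a$ for some $a\in A\cap V$ is dense in $\mathbb{M}$; then genericity of $H$ immediately yields the conclusion that $\rng(\miller)$ belongs to $(\downarrow A)^{V[H]}$ (this is how I read the displayed inclusion). The key input is Claim~\ref{lem_Lemma_7}, which is precisely tailored so that the range of the Miller generic can be steered into any desired union of blocks of a prescribed interval partition.

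First I would apply Claim~\ref{lem_Lemma_7} to $p_0$, producing in $V$ a function $h\in\w^{\uparrow\w}$ and a condition $p_1\leq p_0$ such that for every $I\in[\w]^\w\cap V$ there exists $q\leq p_1$ forcing $\rng(\miller)\sub \bigcup_{n\in I}[h(n),h(n+1))$. Next, define the pairwise disjoint, non-empty, finite sets $K_n:=[h(n),h(n+1))$ for $n\in\w$; all data live in $V$. Because $A\sub\roth$ is groupwise dense in $V$, applied to the sequence $\la K_n:n\in\w\ra$ it produces an $I\in[\w]^\w\cap V$ with $\bigcup_{n\in I}K_n\in A$. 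Invoking Claim~\ref{lem_Lemma_7} for this specific $I$ yields $q\leq p_1\leq p_0$ forcing $\rng(\miller)\sub \bigcup_{n\in I}K_n$, where the right-hand side is an element of $A$. Thus $q$ forces $\rng(\miller)$ to be contained in some element of $A$, and since $p_0\in\mathbb{M}$ was arbitrary, such conditions form a dense subset of $\mathbb{M}$.

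Finally, by genericity of $H$ over $V$ there is such a $q$ in $H$; the corresponding witness $a\in A$ remains in $V\sub V[H]$, so $\rng(\miller)\in(\downarrow A)^{V[H]}$, which is exactly the desired conclusion. The entire argument is a short derivation from Claim~\ref{lem_Lemma_7} combined with the definition of groupwise density, and the only potential subtlety is making sure that the sequence $\la K_n:n\in\w\ra$ and the index set $I$ are chosen in $V$ so that groupwise density of $A$ in $V$ can be applied; this is automatic since $h$ is a ground-model function. Hence there is no genuine obstacle beyond having Claim~\ref{lem_Lemma_7} at our disposal.
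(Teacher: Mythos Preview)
Your proof is correct and follows essentially the same route as the paper: apply Claim~\ref{lem_Lemma_7} to an arbitrary $p_0$, use groupwise density of $A$ on the interval partition $\la [h(n),h(n+1)):n\in\w\ra$ to obtain $I$ with $\bigcup_{n\in I}[h(n),h(n+1))\in A$, and then invoke Claim~\ref{lem_Lemma_7} again to get $q\leq p_0$ forcing $\rng(\miller)$ into this set. Your reading of the displayed inclusion as $\rng(\miller)\in(\downarrow A)^{V[H]}$ is also the intended one.
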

\bpf
It suffices to show that the set 
$$D := \{q \in \mathbb{M}: q \forces "\exists Y \in A\enskip (\rng(\dot{\miller}) \subseteq Y)``\}$$
is dense in $\mathbb{M}$. 

Fix $p_0 \in \mathbb{M}$. 
Let $p_1\leq p_0$ and $h$ be such as in
Claim~\ref{lem_Lemma_7}.
Since $A$ is groupwise dense,
there exists a set $I\in [\w]^\w$ such that 
\[
Y:=\bigcup_{n\in I}[h(n),h(n+1))\in A.
\]
By Claim~\ref{lem_Lemma_7}, there is
$q\leq p_1$ forcing $\rng(\dot{\miller})\sub Y$.
Thus, $q\in D$.
\epf

We are in a position now to complete the proof of $\hot g=\w_2$
in $V[G]$ (recall that $G$ is a $\IP_{\w_2}$-generic filter over $V$).
 We work in $V[G]$. Let $\{A_\gamma:\gamma<\w_1\}$ be a family of 
 groupwise dense sets.
A standard argument (see, e.g., the proof of 
\cite[Theorem~2]{BlassLaflamme}) yields
an $\w_1$-CLUB $C \subseteq \w_2$ such that
 $$A \cap V[G_\alpha] \in V[G_\alpha] \text{ and } A \cap V[G_\alpha] \text{ is groupwise dense in } V[G_\alpha]$$
 for all $\alpha \in C$ and $\gamma\in\w_1$. 

Let $\beta$ be the minimal ordinal  greater than or equal to $\alpha$ for which $\dot{\mathbb{Q}}_\beta = \dot{\mathbb{M}}$.
The forcing $\mathbb R:=\mathbb{P}_{[\alpha,\beta)}^{G_\alpha}$ is  $\w^\w$-bounding.
Let $H$ be an $\mathbb R$-generic filter over $V[G_\alpha]$ such that $V[G_\beta]=V[G_\alpha*H]$ and $\miller_\beta$ be the generic Miller real added at stage  $\beta$ of the iteration.
By Claim~\ref{prop_w^w-bounded}, the set  
\[
A'_\gamma:=\big(\downarrow (A_\gamma \cap V[G_\alpha])\big)^{V[G_\alpha*H]}
\]
is groupwise dense in $V[G_\beta]=V[G_\alpha*H]$
for all $\gamma\in\w_1$.
By Claim~\ref{lem_g_Miller}, 
\[
\rng(\miller_\beta) \sub \bigcap_{\gamma < \w_1} \big(\downarrow (A_\gamma \cap V[G_\alpha])\big)^{V[G_\beta]} \subseteq \bigcap_{\gamma < \w_1} A_\gamma,
\]
and therefore $\bigcap_{\gamma < \w_1} A_\gamma \neq \emptyset$
in $V[G]$.
Thus, $\mathfrak{g} = \w_2$. 
\epf

\brem
If we replace 
$\mathrm{PT}_{\dot{H}}$ in a GJS-iteration with an arbitrary proper forcing of size at most continuum but in addition  demand that for stationary many $\alpha < \w_2$ of cofinality $\w_1$ the iterand $\dot{\mathbb{Q}}_\alpha$ is the Miller forcing, we also obtain that $\hot g=\w_2$ in the the resulting model.
It follows from the fact that $\beta=\alpha$ in the final part of the proof of Lemma~\ref{u<g_in_gjs}, which would make the use of  Claim~\ref{prop_w^w-bounded} obsolete. \hfill $\Box$
\erem

Finally, Theorem~\ref{main_gjs}, which is  formulated below in a more detailed form  for the convenience of the reader, is a direct consequence of 
Corollary~\ref{gjs_sat_dagger},  Lemma~\ref{u<g_in_gjs}
and Corollary~\ref{cor_dagger2}.

\bthm\label{main_gjs_with_Hurewicz} 
In a forcing extension of a ground model of CH obtained by a GJS-iteration, there exists a strong measure zero set $X \subseteq 2^\w$ of size $\w_2 = \mathfrak{c}$ but  every Rothberger space has size at most $ \w_1$.

More precisely, in such forcing extensions
every Hurewicz totally imperfect space has size 
at most $\w_1$ and every Rothberger space is Hurewicz and totally imperfect.
\ethm

Theorem~\ref{main_gjs_with_Hurewicz} yields that in the Goldstern--Judah--Shelah model, every Rothberger space is Hurewicz and totally imperfect.
It turns out that using known facts about the Goldstern--Judah--Shelah model, also the converse implication is true, in this model.

\bthm\label{thm:gjs_R=H}
In the Goldstern--Judah--Shelah model, a space is Rothberger if and only if it is Hurewicz and totally imperfect.
\ethm

\bpf
($\Rightarrow$)
Apply Theorem~\ref{main_gjs_with_Hurewicz}.

($\Leftarrow$)
Every Hurewicz totally imperfect subspace of $\Cantor$ is universally meager~\cite[Proposition~2.3]{UM1}.
In the Goldstern--Judah--Shelah model, by Corollary \ref{cor_dagger1} and Corollary \ref{gjs_sat_dagger} every universally meager subspace has size at most $\w_1$ and each set of size $\w_1$ is SMZ since in the proof of \cite[Theorem~3.8]{GJS1993} it is established that the ideal of strong measure zero sets is closed under unions of less than $\mathfrak{c}$ many sets in this model.
Every SMZ set with the Hurewicz property is Rothberger~\cite[Theorem~8]{FreMil}.
\epf

In the proof of Theorem \ref{thm:gjs_R=H} we have actually established the following:

\begin{prop} \label{prop_univmeager_SMZ}
If every universally meager space is SMZ, then every Hurewicz totally imperfect space is Rothberger.
\end{prop}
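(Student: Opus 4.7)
The plan is to extract the argument essentially verbatim from the $(\Leftarrow)$ direction of the proof of Theorem~\ref{thm:gjs_R=H}, observing that the only set-theoretic hypothesis used there on the Goldstern--Judah--Shelah model was exactly the assumption of the proposition, namely that every universally meager subspace is SMZ. So this is really a matter of isolating which ingredients are truly needed.

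I would start by fixing a Hurewicz totally imperfect space $X$. Since every such space embeds homeomorphically into $\Cantor$, I may assume $X \sub \Cantor$. The first step is to invoke Zakrzewski's result \cite[Proposition~2.3]{UM1}, already cited in the excerpt, which gives that $X$ is universally meager. The second step is to apply the standing hypothesis of the proposition to conclude that $X$ has strong measure zero. The third and final step is to use \cite[Theorem~8]{FreMil}, cited at the end of the proof of Theorem~\ref{thm:gjs_R=H}, which says that every SMZ set with the Hurewicz property is Rothberger; combined with the assumed Hurewicz property of $X$, this yields that $X$ is Rothberger.

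There is no real obstacle here, since each of the three implications is quoted from the literature and was already assembled in the preceding proof; the content of the proposition is merely to note that the model-theoretic features of the Goldstern--Judah--Shelah construction were used only to verify the hypothesis ``every universally meager space is SMZ.'' The only small point worth flagging is that the proposition is stated for ``spaces'' (in the sense defined in the introduction, i.e., subspaces of $\Cantor$ up to homeomorphism), and that Hurewicz, totally imperfect, universally meager, SMZ and Rothberger are all preserved under the relevant identifications, so choosing an embedding into $\Cantor$ at the outset is harmless.
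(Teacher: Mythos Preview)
Your proposal is correct and matches the paper's approach exactly: the paper simply states that the proposition was already established in the proof of Theorem~\ref{thm:gjs_R=H}, and your three steps (Zakrzewski's \cite[Proposition~2.3]{UM1}, the hypothesis, and \cite[Theorem~8]{FreMil}) are precisely the chain of implications used there. Your observation that the model-specific facts in that proof served only to verify the hypothesis ``universally meager $\Rightarrow$ SMZ'' is exactly the point of isolating this proposition.
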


%%%%%%%%%%%%%%%%%%%%%%%%%%%%%%%%%%%%%%%%%%%%%%%%%%%%%%%%%%%%%%%%%
%%%%%%%%%%%%%%%%%%%%%%%%%%%%%%%%%%%%%%%%%%%%%%%%%%%%%%%%%%%%%%%%%

\section{Strong measure zero sets in the Miller model}

In this section we prove Theorem~\ref{main_smz}.

By a \emph{Miller tree} we understand a subtree $T$ of $\w^{<\w}$
consisting of increasing  finite sequences such that the following
conditions are satisfied:
\begin{itemize}
\item
 Every $t \in T$ has an extension $s\in T$ which
is splitting in $T$, i.e., there are more than one immediate
successors of $s$ in $T$;
\item If $s$ is splitting in $T$, then it
has infinitely many immediate successors in $T$.
\end{itemize}
 The \emph{Miller forcing} is the
collection $\mathbb M$ of all Miller trees ordered by inclusion,
i.e., smaller trees carry more information about the generic. This
poset was introduced in \cite{Mil84}.
For a Miller tree $T$ we
shall denote  the set of all splitting nodes of $T$ by $\spl(T)$.
Given $p\in\mathbb M$ and $s\in\Split(p)$, we denote by
$\mathrm{scsr}_p(s)$ the set of all immediate successors of $s$ in $\Split(p)$.
Let $\mathbb M_{\alpha}$ be the iteration with countable supports of
the Miller forcing  $\mathbb M$ of length $\alpha$.

We shall need the following straightforward  fact.

\blem\label{Miller_with_numbers}
For  every  $\mathbb M$-name $\tau$ for a real and $p_0\in\mathbb M$ there exists 
$p\leq_0 p_0$ with the following property: For every  $s\in\Split (p)$ there exists $u_s\in 2^\w$
such that 
\begin{itemize}
\item for every $n\in\w$ and
 all but finitely many  $t\in\scsr_p(s)$  we have
 $ p_t\forces"\tau\uhr n=u_s\uhr n``$; and
\item
$p_s \forces "\tau\uhr(\max(\rng(s))+1)=u_s\uhr(\max(\rng(s))+1)``$
for every $s\in\Split(p)$.
\end{itemize}
\elem
\bpf
Achieving the first item is rather easy. 
Regarding the second one, 
by removing finitely many immediate successors of each splitting node of
$p$, if necessary, we may assume that 
\[
p_t \forces "\tau\uhr(\max(\rng(s))+1)=u_s\uhr(\max(\rng(s))+1)``
\]
for every $s\in\Split (p)$ and $t\in\scsr_p(s)$.
Since the set $\{p_t:t\in \scsr_p(s)\}$ is predense below $p_s$, we have
\[
p_s \forces "\tau\uhr(\max(\rng(s))+1)=u_s\uhr(\max(\rng(s))+1)``.\qedhere
\]
\epf

For $A\in [\w]^\w$  a map $\psi:Y\to 2^A$ is said to be \emph{$=^*$-surjective}
if for every $x\in 2^A$ there exists $y\in Y$ such that $\psi(y)=^*x$,
i.e., $\psi(y)(k)=x(k)$ for all but finitely many $k\in A$.
The proof of the next Lemma is rather similar to that of \cite[Lemma~10]{JudShe94} and hence it might be 
thought of as a folklore.

\blem\label{huge_meager}
Let $Y\in V$ be a non-meager subset of $2^\w$.
Let $G$ be an $\mathbb M$-generic filter over $V$ and $m$ be the generic Miller real
added by $G$.
In $V[G]$ let  
$\psi\colon 2^\w\to 2^{\rng(m)}$ be the map assigning to each $y\in 2^\w$ its restriction
$y\uhr\rng(m)$.
Then the map $\psi\uhr Y$ is $=^*$-surjective. 
\elem
\bpf
Given a name $\tau$  for a real and $p_0\in\mathbb M$,
we need to find $r\leq p_0$ and $y\in Y$ such that 
\[
r\forces "y\uhr\rng(\dot{m})=^*\tau\uhr\rng(\dot{m})``,
\]
where $\dot{m}$ is a name for $m$.
For this aim, let us first consider
$p\leq_0 p_0$ and $\{u_s:s\in\Split(p)\}$
such as in Lemma~\ref{Miller_with_numbers}. Passing to an infinite subset of 
$\mathrm{scsr}_p(s)$, if necessary, we may in addition 
assume that 
$$(\rng(t_1)\setminus\rng(s))\cap (\rng(t_0)\setminus\rng(s))=\emptyset$$
for any $s\in\Split(p)$ and different  $t_0,t_1\in\mathrm{scsr}_p(s)$. 
Shrinking  $\scsr(s)$ for every $s\in\Split(p)$ even more, we may assume that
\begin{equation}\label{verz_rare_miller}
\big(\rng(t_1)\setminus\rng(s_1)\big)\cap \big(\rng(t_0)\setminus\rng(s_0)\big)=\emptyset
\end{equation}
for every different $s_0,s_1\in\Split(p)$ and 
$t_0\in\scsr(s_0)$, $t_1\in\scsr(s_1)$.
It follows that 
\[
A_s:=\{\rng(t)\setminus\rng(s)\, :\, t\in \scsr_p(s)\}
\]
is an infinite disjoint family of finite nonempty subsets of $\w$,
and hence 
$$M_s:=\big\{x\in 2^\w\, :\, \forall^*\, a\in A_s\, (x\uhr a \neq u_{t(a)}\uhr a )\big\}$$ 
is a meager subset of $2^\w$, where 
$t(a)\in\scsr_p(s)$ is such that 
\[
a=\rng(t(a))\setminus\rng(s)
\]
for all $a\in A_s$.
Thus there exists $y\in Y\setminus\bigcup_{s\in\Split(p)}M_s$, and
hence 
$$T_s:=\{t\in\scsr_p(s)\,:\, y\uhr a = u_{t(a)}\uhr a\}$$
is infinite for all $s\in \Split(p)$.
Let $r\leq_0 p$ be a condition such that if $s\in\Split(p)$
belongs to $r$, then $\scsr_r(s)=T_s$. We claim that 
\begin{equation}\label{eq_meager_miller}
r\forces  "y\uhr \big(\rng(\dot{m})\setminus \rng(s_0)\big) = \tau\uhr \big(\rng(\dot{m})\setminus \rng(s_0)\big)``,
\end{equation}
and thus
$r$ and $y$ are as required, where $s_0$ is the stem of $r$. 

Indeed, otherwise we can find $r_1\leq r$ and $k\in \w$
such that 
\begin{equation} \label{if_not_miller}
r_1 \forces "\big(k\in \rng(\dot{m})\setminus \rng(s_0)\big) \wedge
\big(y(k)\neq\tau(k)\big)``.
\end{equation}
By (\ref{verz_rare_miller})
the only way a condition stronger than $r$ (being stronger than $p$)
may decide that $k\in \rng(\dot{m})\setminus \rng(s_0)$ is that
 there must exist 
$s\in \Split(r)$ and $t\in\scsr_r(s)$ such that
$k\in \rng(t)\setminus \rng(s)$ and $r_1\leq r_t$.
Let $a:=\rng(t)\setminus \rng(s)$.
Then, by the definition of 
$r$, we have $t=t(a)\in T_s$, and thus
\begin{equation} \label{if_not1}
y(k)=u_{t(a)}(k).
\end{equation}
On the other hand, by our choice of $p$, according to Lemma~\ref{Miller_with_numbers}, we have that
\[
p_t \forces "\tau\uhr(\max(\rng(t))+1)=u_t\uhr(\max(\rng(t))+1)``,
\]
Since $a\sub \max(\rng(t))+1$, we have $p_t\forces"\tau\uhr a=u_{t(a)}\uhr a``$.
Since $k\in a$ and $r_t\leq p_t$, in particular we have
\begin{equation} \label{if_not2}
 r_t\forces"\tau(k)=u_{t(a)}(k)``.
\end{equation}
It remains to note that the conjunction of~(\ref{if_not1}) and~(\ref{if_not2})
contradicts~(\ref{if_not_miller}), which completes our proof.
\epf

Whenever we speak about uniformly continuous maps between subspaces of $2^A$ 
for some $A\in [\w]^\w$, we consider such $2^A$ together
with the ultrametric $\rho_A$ defined as follows:
If $y_0,y_1$ are different elements of $2^A$, then 
$$\rho_A(y_0,y_1)=2^{- \min\{l\in A\: :\: y_0(l)\neq y_1(l)\}}.$$

\begin{cor} \label{unif_cont_non-meag}
Let $Y\in V$ be a non-meager subset of $2^\w$ 
and $G$ be an $\mathbb M$-generic filter over $V$.
Then in $V[G]$
there is a countable family $\Phi$ of  uniformly continuous maps
from $Y$ to $2^\w$ such that $2^\w=\bigcup_{\varphi\in\Phi}\varphi[Y]$.
\end{cor}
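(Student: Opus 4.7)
The plan is to apply Lemma~\ref{huge_meager} directly, composing the restriction-to-$\rng(m)$ map with a countable family of ``finite patches'' that absorbs the $\eqs$-slop. Here $m$ denotes the generic Miller real added by $G$, as in Lemma~\ref{huge_meager}. First I would set $A:=\rng(m)$, enumerate $A=\{a_0<a_1<\cdots\}$, and use the resulting order-preserving bijection $\w\to A$, $n\mapsto a_n$, to identify $2^A$ with $2^\w$. Concretely, define $\tilde\psi\colon Y\to 2^\w$ by $\tilde\psi(y)(n):=y(a_n)$. This map is uniformly continuous: if $y_0,y_1\in Y$ coincide on $[0,a_N]$, then $\tilde\psi(y_0)$ and $\tilde\psi(y_1)$ coincide on $[0,N]$. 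Via the above identification, Lemma~\ref{huge_meager} translates into the statement that for every $x\in 2^\w$ there is $y\in Y$ with $\tilde\psi(y)\eqs x$.

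Next, I would absorb the finitely many coordinates on which $\tilde\psi(y)$ and $x$ might differ by post-composing with all possible finite corrections. For every finite $F\sub\w$ and $\sigma\in 2^F$, let $\chi_{F,\sigma}\colon 2^\w\to 2^\w$ be given by
\[
\chi_{F,\sigma}(z)(n):=\begin{cases} \sigma(n), & n\in F,\\ z(n), & n\in\w\sm F.\end{cases}
\]
Each $\chi_{F,\sigma}$ is uniformly continuous since it only alters finitely many coordinates of its argument. The proposed countable family is then
\[
\Phi:=\sset{\chi_{F,\sigma}\circ\tilde\psi}{F\in[\w]^{<\w},\ \sigma\in 2^F},
\]
every member of which is a uniformly continuous map $Y\to 2^\w$ as a composition of uniformly continuous maps.

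Finally, to verify $\bigcup_{\varphi\in\Phi}\varphi[Y]=2^\w$, I would fix $x\in 2^\w$, use the $\eqs$-surjectivity just established to pick $y\in Y$ with $\tilde\psi(y)\eqs x$, set $F:=\sset{n\in\w}{\tilde\psi(y)(n)\neq x(n)}$ (a finite set) and $\sigma:=x\uhr F$, and observe that $(\chi_{F,\sigma}\circ\tilde\psi)(y)=x$ by construction. The bulk of the difficulty has been packaged into Lemma~\ref{huge_meager}, so at the level of this corollary there is essentially no obstacle: the only conceptual point is that ``$\eqs$-surjectivity onto $2^{\rng(m)}$'' combined with a countable supply of finite patches is enough to produce an honest surjection onto $2^\w$.
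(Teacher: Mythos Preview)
Your proof is correct and follows essentially the same route as the paper: the map $\tilde\psi$ you define is exactly the composition $\tilde\theta\circ(\psi\uhr Y)$ used there, and your finite-patch maps $\chi_{F,\sigma}$ play the role of the paper's maps $\nu_s(z)=s\cup(z\uhr(\w\sm k))$ for $s\in 2^k$. The only cosmetic difference is that the paper patches initial segments rather than arbitrary finite sets, which suffices since any finite difference set is contained in some $[0,k)$.
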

\bpf
Throughout the proof we work in $V[G]$.
Let  $m$ be the  Miller real
added by $G$  and 
$\psi\colon 2^\w\to 2^{\rng(m)}$ be the map defined in Lemma~\ref{huge_meager}.
Then $\psi$ is uniformly continuous with respect to
$\rho_{\w}$ and $\rho_{\rng(m)}$,
because $2^\w$ is compact.

Let also $\theta\colon\w\to \rng(m)$ be the order preserving bijection.
Then the  map $\tilde{\theta}$ assigning to each 
$x\in 2^{\rng(m)}$ the composition $x\circ \theta\in 2^\w$
is a homeomorphism between $2^{\rng(m)}$ and $2^\w$, and
hence it is uniformly continuous, 
by the compactness of $2^{\rng(m)}$. 

For every $k\in\w$ and $s\in 2^k$ consider the map
$\nu_s\colon 2^{\w}\to 2^\w$ such that
\[
\nu_s(z)=s\cup\big(z\uhr (\w\setminus k)\big).
\]
Again, it is uniformly continuous, because
 $2^\w$ is compact.
 By Lemma~\ref{huge_meager}, the map $\psi\uhr Y\colon Y\to 2^{\rng(m)}$ is $=^*$-surjective, and thus the map
$\varphi=\tilde{\theta}\circ(\psi\uhr Y)\colon Y\to 2^\w$ is $=^*$-surjective, too.
Then the set of all maps
\[
\nu_s\circ \tilde{\theta}\circ (\psi\uhr Y)\colon Y\to 2^\w
\]
for $s\in 2^{<\w}$ is as required.
\epf

The next easy fact is probably well-known, we present a streamlined proof
for the sake of completeness. 

\blem \label{laver_propert}
If $\IP$ has the Laver property, then in $V^{\IP}$ the ground model reals
$2^\w\cap V$ are not SMZ.
\elem
\bpf
Let $G$ be a $\IP$-generic filter over $V$.
In $V[G]$, fix $\la s_n:n\in\w\ra\in\prod_{n\in\w}2^{(n+1)^2}$.
By the Laver property of $\IP$, there is a sequence
\[
\la S_n:n\in\w\ra\in\prod_{n\in\w}[2^{(n+1)^2}]^{n}\cap V
\]
such that $s_n\in S_n$ for all $n\in\w$.
Since $(n+1)^2-n^2=2n+1>n$ for all $n\in\w$, in $V$ we can 
construct a sequence  
\[
\la t_n:n\in\w\ra\in\prod_{n\in\w}2^{[n^2,(n+1)^2)}
\]
such that $t_n\neq s\uhr [n^2,(n+1)^2)$ for all $s\in S_n$ and $n\in\w$.
Let $x:=\bigcup_{n\in\w}t_n\in 2^\w\cap V$.
Then $x\not\in [s]$ for any $s\in\bigcup_{n\in\w}S_n$,
in particular $x\not\in [s_n]$ for any $n\in\w$, which completes our proof.
\epf

\begin{cor} \label{almost_done_smz_miller}
In $V^{\mathbb M_{\w_2}}$ every SMZ set is meager.
\end{cor}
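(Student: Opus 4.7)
The plan is to argue by contradiction: assume $X \subseteq 2^\w$ is SMZ and non-meager in $V[G_{\w_2}]$. The strategy is to reflect a non-meager subset of $X$ into an intermediate Miller extension $V[G_\alpha]$, apply Corollary~\ref{unif_cont_non-meag} at the one-step Miller extension $V[G_\alpha] \subseteq V[G_{\alpha+1}]$ to cover $2^\w \cap V[G_{\alpha+1}]$ by countably many uniformly continuous images of that subset, and derive a contradiction with Lemma~\ref{laver_propert} applied to the tail iteration.

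First I would invoke a reflection argument analogous to Lemma~\ref{mod_groundmodel}, relying on properness of Miller forcing and CH in $V$, to fix an $\w_1$-CLUB $C \subseteq \w_2$ such that $X \cap V[G_\alpha] \in V[G_\alpha]$ for every $\alpha \in C$. The next step, which I expect to be the main obstacle, is to produce $\alpha \in C$ such that $Y := X \cap V[G_\alpha]$ is non-meager in $V[G_\alpha]$. When $|X| \leq \w_1$ this is immediate: properness yields some $\alpha \in C$ with $X \in V[G_\alpha]$, so $Y = X$; and any meager $F_\sigma$ cover of $Y$ in $V[G_\alpha]$ would remain meager in $V[G_{\w_2}]$ by Borel absoluteness, contradicting the non-meagerness of $X$. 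When $|X| = \w_2$ one must descend the non-meager character of $X$ itself to an intermediate stage, which is the main technical subtlety of the argument.

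Granted such $\alpha$ and $Y$, apply Corollary~\ref{unif_cont_non-meag} in the Miller extension $V[G_\alpha] \subseteq V[G_{\alpha+1}]$, with $V[G_\alpha]$ playing the role of the ground model and the stage-$\alpha$ Miller generic $m_\alpha$ playing the role of the new Miller real. This yields a countable family $\Phi$ of uniformly continuous maps $\varphi\colon Y \to 2^\w$ in $V[G_{\alpha+1}]$ with $2^\w \cap V[G_{\alpha+1}] = \bigcup_{\varphi \in \Phi} \varphi[Y]$. Since $Y \subseteq X$ is SMZ in $V[G_{\w_2}]$ and uniformly continuous maps preserve SMZ via moduli of continuity, each $\varphi[Y]$ is SMZ in $V[G_{\w_2}]$; as SMZ forms a $\sigma$-ideal, the countable union $2^\w \cap V[G_{\alpha+1}]$ is SMZ in $V[G_{\w_2}]$.

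This contradicts Lemma~\ref{laver_propert}: the tail iteration $V[G_{\alpha+1}] \to V[G_{\w_2}]$ is a countable support iteration of Miller forcings, Miller forcing has the Laver property, and countable support iterations of proper forcings preserve the Laver property, so the tail has the Laver property. Lemma~\ref{laver_propert} then implies that $2^\w \cap V[G_{\alpha+1}]$ is not SMZ in $V[G_{\w_2}]$, contradicting the conclusion of the previous paragraph. As flagged above, the main obstacle is the reflection of non-meagerness to an intermediate model in the case $|X| = \w_2$, which will presumably require a careful analysis of how meager covers from intermediate Miller extensions assemble in $V[G_{\w_2}]$, in the spirit of the Judah--Shelah argument alluded to in the introduction.
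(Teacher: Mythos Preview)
Your proposal is correct and follows essentially the same route as the paper's proof. The reflection step you flag as the main obstacle---finding $\alpha$ with $X\cap V[G_\alpha]$ non-meager in $V[G_\alpha]$---is dismissed by the paper as ``a standard argument'' (a L\"owenheim--Skolem style closure: for each meager $F_\sigma$ code in an intermediate model pick a witness in $X$ avoiding it, and close off under this and the name for $X$ at an $\alpha$ of cofinality~$\w_1$), so nothing as delicate as a Judah--Shelah analysis is needed there.
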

\bpf
Let $G$ be an $\mathbb M_{\w_2}$-generic.
In $V[G]$, fix a non-meager set $Y_0\sub 2^\w$.
Suppose, contrary to our claim, that   
$Y_0$ is SMZ in $V[G]$.
A standard argument yields 
$\alpha\in\w_2$ such that the set $Y:=Y_0\cap V[G_\alpha]\in V[G_\alpha]$
is non-meager in $V[G_\alpha]$.
By Corollary~\ref{unif_cont_non-meag}  
there exists a countable family $\Phi$ of uniformly continuous 
maps from $Y$ to $2^\w$ such that 
$2^\w\cap V[G_{\alpha+1}]=\bigcup_{\varphi\in\Phi}\varphi[Y]$.
Since SMZ sets are preserved by uniformly continuous maps and countable unions,
we conclude that $2^\w\cap V[G_{\alpha+1}]$ is SMZ in $V[G]$.
The forcing $\mathbb M$ has the Laver property~\cite[Theorem~7.3.45]{BarJud95}
and the Laver property is preserved by iterations of proper posets with countable supports~\cite[Theorem~6.3.34]{BarJud95}. 
It follows that $\bbM_{\w_2}$ has the Laver property.
By Lemma~\ref{laver_propert}, the set $2^\w\cap V[G_{\alpha+1}]$ is not SMZ in $V[G]$, a contradiction.
\epf

\bpf[{Proof of Theorem~\ref{main_smz}}] 
We work in the Miller model throughout the proof.
By Corollary~\ref{cor_miller1_pm} it suffices to 
show that all SMZ sets are perfectly meager.
Let $P\sub\Cantor$ be a perfect set and $X$ be a SMZ set.
Fix a homeomorphism $f\colon P\to\Cantor$.
Since $X\cap P$ is SMZ and $f$ is uniformly continuous,  
$f[X\cap P]$ is SMZ and hence it is meager in $2^\w$ by
Corollary~\ref{almost_done_smz_miller}.
Thus, the set $X\cap P$
is meager in $P$, which completes our proof. 
\epf

%%%%%%%%%%%%%%%%%%%%%%%%%%%%%%%%%%%%%%%%%%%%%%%%%%%%%%%%%%%%%%%%%

\section{Comments and open questions}

\subsection*{On property $(\dagger)$}
Recall that a real $g$ in a generic extension is called eventually different if for all $y \in \w^\w \cap V$ the set $\sset{n \in \w}{y(n) = g(n)}$ is finite. 
\begin{prop} \label{prop_eventdiff}
Let $\mathbb P$ be a forcing notion satisfying $(\dagger)$. Then in $V^{\mathbb{P}}$ there are no eventually different reals.
\end{prop}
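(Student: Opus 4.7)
The plan is to argue by contradiction. Suppose that there exist $p_0\in \mathbb{P}$ and a $\mathbb{P}$-name $\dot{g}$ for an element of $\w^\w$ such that $p_0$ forces $\dot{g}$ to be eventually different from every real in $V$. I shall produce a ground-model real $y\in \w^\w$ and an extension $q\leq p_0$ forcing $\{n\in\w : \dot{g}(n)=\check{y}(n)\}$ to be infinite, contradicting the choice of $p_0$.

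Fix a countable elementary submodel $M\prec H(\theta)$ with $\mathbb{P},\dot{g},p_0\in M$ and an injective enumeration $\{p_j:j\in\w\}$ of $\mathbb{P}\cap M$. Using density of conditions deciding $\dot{g}(j)$ together with elementarity, for each $j\in\w$ pick $\tilde{p}_j\in\mathbb{P}\cap M$ with $\tilde{p}_j\leq p_j$ together with $y(j)\in\w$ such that $\tilde{p}_j$ forces $\dot{g}(j)=\check{y}(j)$. Set $y:=\seq{y(j)}{j\in\w}\in V\cap\w^\w$ and define a regressive function $\varphi\colon\mathbb{P}\cap M\to\mathbb{P}\cap M$ by $\varphi(p_j):=\tilde{p}_j$, which is well defined because the enumeration is injective.

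Now apply property $(\dagger)$ to the constant sequence $\varphi_i:=\varphi$ for $i\in\w$ and to the starting condition $p_0\in\mathbb{P}\cap M$, obtaining an $(M,\mathbb{P})$-generic condition $q\leq p_0$ which forces that $\Gamma_{\mathbb{P}}\cap\{\tilde{p}_j:j\in\w\}$ is infinite. Let $G\ni q$ be $\mathbb{P}$-generic over $V$ and set $g:=\dot{g}^G$. Whenever $\tilde{p}_j\in G$, one has $g(j)=y(j)$; moreover, distinct elements of $\{\tilde{p}_j:j\in\w\}\cap G$ lie over disjoint non-empty fibres of the map $j\mapsto\tilde{p}_j$, so $\{j\in\w:\tilde{p}_j\in G\}$ has cardinality at least $|\{\tilde{p}_j:j\in\w\}\cap G|$, which is infinite. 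Hence $\{j\in\w:g(j)=y(j)\}$ is infinite, contradicting that $q\leq p_0$ forces $\dot{g}$ to be eventually different from $\check{y}$.

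The argument is genuinely short and uses $(\dagger)$ only through a single regressive function. The one mildly delicate point is the pigeonhole step at the end, which handles the possibility that $j\mapsto\tilde{p}_j$ is not injective: a collision $r=\tilde{p}_{j_1}=\tilde{p}_{j_2}\in G$ with $j_1\neq j_2$ is actually helpful rather than harmful, since this single $r\in\Gamma_{\mathbb{P}}\cap\{\tilde{p}_j:j\in\w\}$ already delivers two agreements $g(j_1)=y(j_1)$ and $g(j_2)=y(j_2)$.
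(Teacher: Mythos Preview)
Your proof is correct and follows essentially the same route as the paper: enumerate $\mathbb{P}\cap M$, use a single regressive function sending $p_j$ to an extension deciding the $j$-th value of the name, and apply $(\dagger)$ to obtain infinitely many agreements with the resulting ground-model real. Your explicit pigeonhole remark handling possible non-injectivity of $j\mapsto\tilde{p}_j$ is a detail the paper leaves implicit, but the arguments are otherwise the same.
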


\begin{proof}
Let $\tau$ be a name for a real and let $p \in \mathbb P$. Let $M \ni \mathbb P, \tau, p$ be a countable elementary submodel of $H(\theta)$ for sufficiently large $\theta$. Enumerate $M \cap \mathbb P = \sset{p_i}{i \in \w}$. Let $\varphi \colon M \cap \mathbb{P} \rightarrow M \cap \mathbb{P}$ be a function such that $\varphi(p_i) \leq p_i$ decides $\tau(i)$ to be $a_i \in \w$ for all $i \in \w$. Define $g_M \colon \w^\w \rightarrow \w^\w$ by setting $g_M(i) := a_i$. By property $(\dagger)$, there is $q \leq p$ such that 
\[
q \forces "\exists^\infty p \in M \cap \mathbb{P} \enskip (\varphi(p) \in \Gamma_\mathbb{P})``.
\]

Let $G \subseteq \mathbb{P}$ be a generic filter. By density, we can find $q \in \Gamma_\mathbb{P}$ like above. Then $\sset{i \in \w}{\tau(i) = a_i = g_M(i)}$ is infinite. 
\end{proof}

\begin{rem}
As a consequence of Proposition \ref{prop_eventdiff}, Random forcing, Hechler forcing and Laver forcing all do not satisfy property $(\dagger)$. Note that although there are ccc forcings violating property $(\dagger)$, it is easy to see that all $\sigma$-closed forcings satisfy property $(\dagger)$. Since Sacks forcing and $PT_H$ satisfy $(\dagger)$, one could conjecture that all strong Axiom A forcings, see \cite[Definition~1.3]{MildShelah2019}, satisfy property $(\dagger)$. However, the forcing $PT_{f,g}$, which was introduced in \cite[Definition~7.3.3]{BarJud95}, satisfies strong Axiom A \cite[Lemma~7.3.5]{BarJud95}, but does not satisfy property $(\dagger)$ since it adds eventually different reals \cite[Lemma~7.3.6]{BarJud95}. \hfill $\Box$
\end{rem}

Is is well known that adding eventually different reals is equivalent to $\w^\w \cap V$ being meager in $\w^\w$ in the generic extension, see \cite[Lemma~2.4.8]{BarJud95}. 

Let us note that in all our applications the inequality $\fu<\fg$ was essential. Maybe our result follows from it alone. An important case to check is the forcing introduced by Blass and Shelah in \cite{BlaShe87}, which was the first model for which $\fu<\fg$ was shown \cite[Theorem~2]{BlassLaflamme}. Here we have $\fc = \fs = \mathit{non}(\mathcal{M})$, see \cite[Theorem~5.2]{BlaShe87}, which implies that $\w^\w \cap V$ is meager and therefore, $(\dagger)$ cannot be fulfilled by the forcing. 
\bprb
Does it follow from $\fu<\fg$ that all universally meager subspaces (Hurewicz totally imperfect) have size at most $\w_1$?
What about the Blass--Shelah model?
\eprb

The next problem asks whether the main result of \cite{GJS1993} can be improved towards obtaining even Rothberger spaces of size $\mathfrak c$ without adding Cohen reals. Theorem~\ref{main_gjs_with_Hurewicz} implies that 
a conceptually different model is needed from the one constructed in \cite{GJS1993}. 

\bprb 
Can a forcing extension of a model of GCH contain a Rothberger space of size $\w_2$, if the corresponding poset does not add Cohen reals, and is obtained by an iteration of length $\w_2$ of proper posets of
size $\w_1$  with countable supports?
\eprb

\subsection*{Relations between Rothberger spaces and Hurewicz totally imperfect spaces}

We proved that in the Goldstern--Judah--Shelah model the classes of Rothberger spaces and Hurewicz totally imperfect spaces coincide.
Under CH, there is a Rothberger space which is not Hurewicz, e.g., a Luzin set, and a Hurewicz totally imperfect space which is not Rothberger, e.g., a Sierpiński set, see \cite{coc2, Scheepers96} and references therein.
It follows that under CH, Rothberger and Hurewicz totally imperfect are incomparable properties.

In the Laver model all Rothberger spaces are countable (and thus trivial) and there is in ZFC an uncountable Hurewicz totally imperfect space.
In the Miller model, there is an uncountable Rothberger space and Rothberger implies Hurewicz totally imperfect (it follows from the inequality $\fu<\fg$, in this model).

\bprb \label{prob:6.5}
Is there a totally imperfect Hurewicz space which is not Rothberger, in the Miller model?
\eprb

In the other direction, it is a folklore fact\footnote{We have learned it from P. Zakrzewski.} that in the model obtained by adding $\w_2$ Cohen reals with finite supports to a model of CH, every universally meager (hence also Hurewicz totally imperfect) subspace has size at most $\w_1$. Indeed, if $X\in [2^\w]^{\w_2}$ in this model, then there exists a 
Luzin set $L\sub 2^\w$ and an injective continuous map $f\colon L\to 2^\w$ such that $|f[L]\cap X|=\w_2$, see~\cite[p.~577]{Mil83}. Passing to a co-countable closed subspace of $L$, if necessary, we may additionally assume that 
$L$ has no isolated points. Let $R\supseteq L$ be a $G_\delta$-subspace of
$2^\w$ such that $L$ is dense in $R$ and $f$ can be extended to 
a continuous map $\bar{f}:R\to 2^\w$. 
Thus, $R$ is a Polish space, $\bar{f}$ is nowhere constant because 
its restriction to $L$, namely $f$, is injective, and $\bar{f}^{-1}[X]$ is non-meager in $R$ because it contains an uncountable subspace of $L$, and hence $X$ is not universally meager by \cite[Theorem~1.1]{UM2}.

Since adding $\w_1$ Cohen reals makes any ground model set of reals Rothberger, see \cite[Theorem~11]{ScheepersTall}, every Hurewicz totally imperfect space is Rothberger in this model. The inverse implication is not true since the Cohen reals form a Luzin set, hence are Rothberger, but not even meager.

Another model which should be mentioned in this context is the one obtained by Bartoszyński and Shelah in \cite{BS02} by the countable support iteration of length $\w_2$ over a model of CH of the poset $PT_H$ for a fixed strictly increasing $H$ of the form $H(n)=2^{h(n)}$ for some $h\in\w^\w$. Here again every Hurewicz totally imperfect space is Rothberger. Indeed, every universally meager subspace has size at most $\w_1$, see \cite[Section~4]{BS02}, and every subspace of size $\w_1$ is SMZ because even a union of any $\w_1$ many SMZ sets is SMZ, which can be proven by combining \cite[Fact~0.4]{GJS1993} with the following modification of \cite[Fact~2.26]{GJS1993}: 

\begin{fact}
Let $h^* \colon \w \rightarrow \w \setminus \{0\}$, $H^* \colon \w \rightarrow \mathcal{P}(2^{<\w})$ with $H^*(n) = 2^{h^*(n)}$, $\mathcal{H}$ be a dominating family, $\bar{\nu}$ have index $\mathcal{H}$ and $\dot{g}$ be a name for the generic function added by $PT_{H^*}$. Then 
\[
\begin{aligned}
\forces_{PT_{H^*}} &"\forall h \in \mathcal{H} \enskip \exists d \in [\w]^\w \enskip \exists h_1 \in \mathcal{H} \\ 
&\big( h \leq d \leq h_1 \text{ and } \bigcup_{k \in \w} [\nu^{h_1}(k)] \subseteq \bigcup_{n \in d} [\dot{g}(n)]\big)``. 
\end{aligned}
\]
\end{fact}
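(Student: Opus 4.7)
The plan is to mimic the proof of \cite[Fact~2.26]{GJS1993}, with an additional fusion layer to secure the domination conditions $h\le\dot d\le\dot h_1$. Fix $p_0\in PT_{H^*}$ and $h\in\mathcal H$; I would build $q\le p_0$ as a fusion $q=\bigcap_k p_k$ with $p_{k+1}\le_k p_k$, together with $PT_{H^*}$-names $\dot d$ for an infinite subset of $\w$ and $\dot h_1$ for an element of $\mathcal H$, forcing the conclusion.

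At stage $k$ of the fusion I would arrange that all nodes of $\Split_k(p_k)$ have a common length $n_k$, chosen by thinning so that $n_k>h(k)$. Taking $\dot d=\{n_k:k\in\w\}$, a ground-model set, the inequality $h\le\dot d$ is immediate. The delicate point is $\dot h_1$: along a generic branch through $q$, the sequence $\langle\dot g(n_k):k\in\w\rangle$ ranges over all of $\prod_{k\in\w}H^*(n_k)=\prod_{k\in\w}2^{h^*(n_k)}$, and I need, for each such sequence $\langle s_k\rangle$, an $h_1\in\mathcal H$ satisfying $\nu^{h_1}(k)\supseteq s_k$ for every $k$. The assumption that $\bar\nu$ has index $\mathcal H$ is meant to furnish, in $V$, exactly such an assignment $\langle s_k\rangle\mapsto h_1(\langle s_k\rangle)$; one then lets $\dot h_1$ be the name obtained by evaluating this ground-model assignment at the generic sequence $\langle\dot g(n_k):k\in\w\rangle$. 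With this choice, the covering $\bigcup_k[\nu^{\dot h_1}(k)]\subseteq\bigcup_{n\in\dot d}[\dot g(n)]$ is forced by construction.

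The remaining requirement $\dot d\le\dot h_1$ will be secured by a bookkeeping refinement during the fusion: after committing to $n_k$, I would enumerate the finitely many possible initial segments $\langle s_j:j\le k\rangle\in\prod_{j\le k}2^{h^*(n_j)}$, inspect the ground-model candidates $h_1(\langle s_j,\ldots\rangle)$ arising from all their continuations, and use the dominating property of $\mathcal H$ to pick $n_{k+1}$ below a common upper bound for these. This keeps the increasing enumeration of $\dot d$ dominated by $\dot h_1$ along every generic branch. The fusion limit $q$ is a legitimate condition in $PT_{H^*}$ because the splitting structure is preserved at every stage.

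The main obstacle is orchestrating the three constraints simultaneously: membership $\dot h_1\in\mathcal H$ (which forces $\dot h_1$ to be built in $V$ from only finitely much branch data at each step), the covering $\nu^{\dot h_1}(k)\supseteq\dot g(n_k)$, and the domination $\dot d\le\dot h_1$. The combinatorial hypothesis that $\bar\nu$ has index $\mathcal H$, in the sense inherited from \cite{GJS1993}, is precisely what reconciles the first two; the third is then a matter of dominance plus careful bookkeeping. I expect the verification at the fusion limit to be routine once the stagewise invariants are set up correctly, so the real work is in the setup rather than in the limit step.
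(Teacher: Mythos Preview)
The paper does not provide a proof of this Fact; it is stated without argument as a modification of \cite[Fact~2.26]{GJS1993}, so there is nothing in the paper to compare your sketch against beyond the implicit suggestion that one should mimic the argument in \cite{GJS1993}. Your plan to do exactly that, via a fusion producing $d=\{n_k:k\in\w\}$ in the ground model and reading $h_1$ off the generic branch, is the expected route.

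One step in your outline is not yet justified. For the inequality $\dot d\le\dot h_1$ you propose, at stage $k$, to enumerate the finitely many initial segments $\langle s_j:j\le k\rangle$, look at the candidates $h_1(\langle s_j,\ldots\rangle)$ ``arising from all their continuations'', and then choose $n_{k+1}$ below a common bound. But each finite initial segment has continuum many continuations, so you are asking for a uniform positive lower bound on $h_1(k{+}1)$ over an uncountable family of $h_1\in\mathcal H$; nothing you have written explains why such a bound exists. Whether it can be arranged depends entirely on what ``$\bar\nu$ has index $\mathcal H$'' means in \cite{GJS1993}: in that setup the assignment producing $h_1$ is continuous in the sense that $h_1(k)$ (or at least a lower bound for it) is determined by finitely much branch data, and that is what makes the bookkeeping go through. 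You are treating this as a black box; to close the gap you should unpack the definition and verify that the required continuity is present in this variant, rather than asserting it. The rest of the fusion argument is standard once this point is secured.
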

 Therefore, every universally meager subspace is SMZ in this model and thus, by Proposition \ref{prop_univmeager_SMZ}, every Hurewicz totally imperfect space is Rothberger. 

\bprb
Is there a Rothberger space which is not Hurewicz, in the model of Bartoszy\'{n}ski--Shelah?
\eprb

\bibliographystyle{abbrvurl}

\bibliography{library}

\end{document}